\newcommand{\C}{\mathbf C}
\newcommand{\Rq}{{\mathbf R}^{4}}
\newcommand{\Rt}{{\mathbf R}^{3}}
\newcommand{\Rd}{{\mathbf R}^{2}}
\newcommand{\Cd}{{\mathbf C}^{2}}
\newcommand{\de}{\text{\rm d}\mspace{1mu}}
\newcommand{\fff}{\boldsymbol{I}}
\newcommand{\sff}{\mathbf{I} \mathbf{I}}
\newcommand{\He}{\text{Hess}\mspace{1mu}}
\newcommand{\rank}{\rm rank\,}
\newcommand{\beq}{\begin{equation}}
\newcommand{\eeq}{\end{equation}}
\newcommand{\ba}{\begin{align}}
\newcommand{\ea}{\end{align}}
\newcommand{\baa}{\begin{align*}}
\newcommand{\eaa}{\end{align*}}
\newtheorem{theorem}{Theorem}
\newtheorem{lemma}{Lemma}
\newtheorem{cor}{Corollary}
\newtheorem{prop}{Proposition}
\newtheorem*{wineq}{Wintgen inequality}
\newtheorem*{lnf}{Local Normal Form}
\newtheorem*{claim}{Claim}
\theoremstyle{definition}
\newtheorem{defn}{Definition}
\newtheorem{ex}{Example}
\theoremstyle{remark}
\newtheorem{rem}{Remark}
\begin{document}

\title[Lagrangean surfaces]{Inflection points and asymptotic lines on Lagrangean surfaces}

\author{J. Basto-Gon\c calves}
\date{\today}
\address{Centro de Matem\'atica da Universidade do
Porto, Portugal}
\email{jbg@fc.up.pt}
\thanks{Financial support from FCT, and  Calouste Gulbenkian
Foundation}

\subjclass{Primary: 53A05, 53D12, 34A09}
\keywords{}

\begin{abstract}
We describe the structure of the asymptotic lines near an inflection point of a Lagrangean surface, proving that in the generic situation it corresponds to two of the three possible cases when the discriminant curve has a cusp singularity. Besides being stable in general, inflection points are proved to exist on a compact Lagrangean surface whenever its Euler characteristic does not vanish.
\end{abstract}
\maketitle

\section{Introduction}
The origin of the study of surfaces  in $\Rq$ was the interpretation of a complex plane curve as a real surface. This is very natural, as formulae for curves in the real plane, curvature for instance, can be adapted to complex curves and then seen again in a real version but for surfaces in $\Rq$.

In a different context, an invariant torus for a two degrees of freedom integrable Hamiltonian system is also a Lagrangean surface  in $\Rq$, and in general Lagrangean surfaces play an important role for 4-dimensional symplectic manifolds. In particular, the real version of plane complex curves are always Lagrangean surfaces for a suitable symplectic form.

From the point of view of singularity theory, and after the generic points, the first interesting points are inflection points. We follow a similar approach here, along the lines of \cite{little} and \cite{dcc,rdcc}, and consider the generic situation for inflection points in Lagrangean surfaces: the normal form at those points and thestructure of the asymptotic lines around them.

The generic picture for Lagrangean surfaces is quite different from that for arbitrary generic surfaces in $\Rq$: we prove that all inflection points are flat inflection points (this is a codimension one situation in general), and that there are two possible (topological) phase portraits for the asymptotic lines around an inflection point, corresponding to two of three cases considered in \cite{tari08} when the discriminant curve has a cusp singularity (this is  codimension two in general).

The presence of flat inflection points is a stable phenomenon for Lagrangean surfaces, and we prove their existence for compact Lagrangean surfaces with non vanishing Euler characteristic, giving an estimate of their number in the generic case.

\section{Moving frames}

We consider a surface $S\subset \mathbf R^4$ locally given by a parametrization:
\[
\Xi : U\subset\mathbf R^2\longrightarrow \Rq
\]
and  a set $\{e_1,e_2,e_3,e_4\}$  of orthonormal vectors, depending on $(x,y)\in U$,  satisfying:
\begin{itemize}
\item $e_1(x,y)$ and $e_2(x,y)$ span the tangent space $T_{\Xi(x,y)}S$ of $S$ at $\Xi(x,y)$.
\item $e_3(x,y)$ and $e_4(x,y)$ span the normal space $N_{\Xi(x,y)}S$ of $S$ at $\Xi(x,y)$.
\end{itemize}
Then $\Xi,\{e_1,e_2,e_3,e_4\}$ is an \emph{adapted moving frame} for $S$. Associated to this frame, there is a dual basis for 1-forms, $\{\omega_1,\omega_2,\omega_3,\omega_4\}$.

If we take $U$ small enough, $\Xi$ can be assumed to be an embedding; then the vectors $e_i$ and the 1-forms  $\omega_i$ can be extended to an open subset of $\Rq$.

While the image of $D\Xi$ is the tangent space of $S$, the image of the second derivative $D^2\Xi$ has both tangent and normal components; the vector valued quadratic form associated to the normal component:
\beq
(D^2\Xi\cdot e_3) e_3+(D^2\Xi\cdot e_4) e_4
\eeq
is the \emph{second fundamental form} $\sff$ of $S$. It can be written \cite{little} as $\sff_1e_3+\sff_2 e_4$, where:
\begin{subequations}\label{sff}
\begin{align}
\sff_1=&a\mspace{1mu} \omega_1^2+2b\mspace{1mu}  \omega_1  \omega_2+c\mspace{1mu}  \omega_2^2\label{sff1}\\
\sff_2=&e\mspace{1mu}  \omega_1^2+2f\mspace{1mu}  \omega_1  \omega_2+g\mspace{1mu}  \omega_2^2\label{sff2}
\end{align}
\end{subequations}

Let  $\mathcal M_1$ and $\mathcal M_2$ be the matrices associated to the above quadratic forms:
\[
\mathcal M_1=\left[\begin{matrix}
a&b\\
b&c
\end{matrix}\right],\qquad
\mathcal M_2=\left[\begin{matrix}
e&f\\
f&g
\end{matrix}\right]
\]

The \emph{mean curvature} $\mathcal H$ is defined by:
\beq
\mathcal H=\dfrac{1}{2}\left(\mathcal H_1+\mathcal H_2\right),
\quad
\mathcal H_i=\hbox{Tr}\mspace{2mu}\mathcal M_i , \qquad i=1,2
\eeq
and  \emph{Gaussian curvature} $K$ is given by:
\beq
K=K_1+K_2 ,
\quad K_i= \det\mathcal M_i , \qquad i=1,2
\eeq

We can express the Gaussian, mean  and  the \emph{normal curvature} in terms of the coefficients of the second fundamental form \cite{little}:
\begin{align}\label{curvatures}
K=&(ac-b^2)+(eg-f^2)\\
\kappa=&(a-c)f-(e-g)b\notag\\
\mathcal H=&\dfrac{1}{2}(a+c)e_3+\dfrac{1}{2}(e+g)e_4\notag
\end{align}

We consider a surface $S$ locally given by a parametrisation:
\[
\Xi : (x,y)\mapsto (x,y,\varphi(x,y),\psi(x,y))
\]
where $\Phi=(\varphi, \psi)$ has vanishing first jet at the origin,  $j^1\Phi(0)=0$.

The vectors $T_1$ and $T_2$ span the tangent space of $S$ and the vectors $N_1$ and $N_2$ span the normal space:
\begin{align}
T_1=&\Xi_x=(1,0,\varphi_x,\psi_x),& & N_1=(-\varphi_x,-\varphi_y,1,0)\\
\notag
T_2=&\Xi_y=(0,1,\varphi_y,\psi_y), && N_2=(-\psi_x,-\psi_y,0,1)
\end{align}
the index $z$ standing for derivative with respect to $z$.

The induced metric in $S$ is given by the first fundamental form:
\[
\fff=E\de x^2+2F\de x\de y+G\de y^2
\]
where:
\[
E=T_1\cdot T_1,\quad F=T_1\cdot T_2,\quad G=T_2\cdot T_2,
\]

We define:
\[
\hat E=N_1\cdot N_1,\quad \hat F=N_1\cdot N_2,\quad \hat G=N_2\cdot N_2,
\]
and it is easy to verify that:
\[
\hat E  \hat G-\hat F^2=W, \quad\hbox{where \ } W=EG-F^2
\]

Now consider the orthonormal frame defined by:
\begin{align}
e_1=&\dfrac{1}{\sqrt{E}}T_1,& & e_2=\sqrt{\dfrac{1}{EW}}\left(ET_2-F T_1\right)\\
\notag
e_3=&\dfrac{1}{\sqrt {\hat E}}N_1, && e_4=\sqrt{\dfrac{1}{\hat EW}}\left(\hat EN_2-\hat F N_1\right)
\end{align}
It is easy to see that:
\begin{align}\label{abc}
a=&\dfrac{1}{E\sqrt{\hat E}}\varphi_{xx}\\
\notag
b=&\dfrac{1}{E\sqrt{ W\hat E}}(E\varphi_{xy}-F\varphi_{xx})\\
\notag
c=&\dfrac{1}{EW\sqrt{\hat E}}(E^2\varphi_{yy}-2EF\varphi_{xy}+F^2\varphi_{xx})\\\label{efg}
e=&\dfrac{1}{E\sqrt{\hat E W}}\left(\hat E\psi_{xx}-\hat F\varphi_{xx}\right)\\
\notag
f=&\dfrac{1}{EW\sqrt{\hat E }}\left(E(\hat E\psi_{xy}-\hat F\varphi_{xy})-F(\hat E\psi_{xx}-\hat F\varphi_{xx})\right)\\
\notag
g=&\dfrac{1}{EW\sqrt{W\hat E }}\left(E^2(\hat E\psi_{yy}-\hat F\varphi_{yy})-\right.\\
\notag
&\phantom{\dfrac{1}{EW\sqrt{W\hat E }}}\left.-2EF(\hat E\psi_{xy}-\hat F\varphi_{xy})+F^2(\hat E\psi_{xx}-\hat F\varphi_{xx})\right)
\end{align}

Then, using these formul\ae\ or those from  \cite{am1,am2}, we obtain the following expressions for the Gaussian and normal curvature:

\begin{prop}
The Gaussian curvature is  given by:
\beq
K=\dfrac{1}{W^2}(\hat E H_\psi-\hat FQ+\hat G H_\varphi)\label{K}
\eeq
where:
\[
H_f=\He(f)=\left|
\begin{array}{cc}
f_{xx} &f_{xy}\\
f_{xy} & f_{yy}
\end{array}
\right|, \quad
Q=\left|
\begin{array}{cc}
\varphi_{xx} & \varphi_{xy}\\
\psi_{xy} & \psi_{yy}
\end{array}
\right|-\left|
\begin{array}{cc}
\varphi_{xy} & \varphi_{yy}\\
\psi_{xx} & \psi_{xy}
\end{array}\right|
\]
\end{prop}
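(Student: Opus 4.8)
The plan is to read $K$ off from the Gauss equation for surfaces in $\Rq$, which in the present conventions is $K=K_1+K_2=(ac-b^2)+(eg-f^2)$ (equation \eqref{curvatures}), and then substitute the explicit coefficients \eqref{abc} and \eqref{efg}. The whole argument splits into two structurally identical $2\times 2$ determinant evaluations, after which one invokes once the identity $\hat E\hat G-\hat F^2=W$ recorded earlier.

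First I would treat the $\varphi$-block. Forming $ac-b^2$ from the formul\ae\ for $a,b,c$, the terms linear in $F$ cancel against each other and so do the $F^2$-terms, and the numerator collapses to $E^2(\varphi_{xx}\varphi_{yy}-\varphi_{xy}^2)$; dividing by the common denominator $E^2W\hat E$ gives $K_1=H_\varphi/(W\hat E)$.

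The $\psi$-block is handled the same way once one notices that $e,f,g$ in \eqref{efg} are obtained from $a,b,c$ by replacing the three entries $\varphi_{xx},\varphi_{xy},\varphi_{yy}$ with the combinations $P_{xx}=\hat E\psi_{xx}-\hat F\varphi_{xx}$, $P_{xy}=\hat E\psi_{xy}-\hat F\varphi_{xy}$, $P_{yy}=\hat E\psi_{yy}-\hat F\varphi_{yy}$, and multiplying each denominator by an extra $\sqrt W$ (hence by $W$ in the products $eg$ and $f^2$). So the same cancellation yields $K_2=eg-f^2=(P_{xx}P_{yy}-P_{xy}^2)/(W^2\hat E)$. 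Expanding $P_{xx}P_{yy}-P_{xy}^2$ as a quadratic form in $\hat E,\hat F$ produces $\hat E^2H_\psi-\hat E\hat F\,Q+\hat F^2H_\varphi$, where $Q=\varphi_{xx}\psi_{yy}-2\varphi_{xy}\psi_{xy}+\varphi_{yy}\psi_{xx}$ is exactly the sum of the two $2\times2$ determinants in the statement.

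Adding the blocks, $K=H_\varphi/(W\hat E)+(\hat E H_\psi-\hat F Q+\hat F^2H_\varphi/\hat E)/W^2$; collecting the two $H_\varphi$ contributions produces the factor $(W+\hat F^2)/(W^2\hat E)$, and substituting $W+\hat F^2=\hat E\hat G$ (from $\hat E\hat G-\hat F^2=W$) turns it into $\hat G/W^2$, which is the asserted formula. No step is genuinely hard; the only points needing care are spotting the $P$-substitution that reduces the $\psi$-block to the form of the $\varphi$-block, and using $\hat E\hat G-\hat F^2=W$ at the very end to clear the spurious $\hat E$ from the denominator.
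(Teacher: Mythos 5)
Your computation is correct and is exactly the argument the paper intends: the paper simply asserts the formula follows "using these formulæ" for $a,\dots,g$ from the moving frame, i.e.\ substituting \eqref{abc}--\eqref{efg} into $K=(ac-b^2)+(eg-f^2)$, which is what you carry out (the telescoping of the $F$-terms, the identification $Q=\varphi_{xx}\psi_{yy}-2\varphi_{xy}\psi_{xy}+\varphi_{yy}\psi_{xx}$, and the final use of $\hat E\hat G-\hat F^2=W$ all check out). The only nitpick is verbal: $Q$ as displayed is a \emph{difference} of two determinants, though the expanded expression you give for it is the right one.
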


\begin{prop}
The  normal curvature is given by:
\beq
\kappa=\dfrac{1}{W^2}(EL-FM+GN)\label{N}\
\eeq
where:
\[
L=\left|
\begin{array}{cc}
\varphi_{xy} & \varphi_{yy}\\
\psi_{xy} & \psi_{yy}
\end{array}
\right|, \quad
M=\left|
\begin{array}{cc}
\varphi_{xx} & \varphi_{yy}\\
\psi_{xx} & \psi_{yy}
\end{array}
\right|, \quad
N=\left|
\begin{array}{cc}
\varphi_{xx} & \varphi_{xy}\\
\psi_{xx} & \psi_{xy}
\end{array}
\right|
\]
\end{prop}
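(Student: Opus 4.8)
The plan is to establish \eqref{N} by direct substitution, turning it into a polynomial identity in the second derivatives of $\varphi$ and $\psi$. I would start from the coefficient expression $\kappa=(a-c)f-(e-g)b$ recorded among the formulas \eqref{curvatures}, insert the values of $a,b,c,e,f,g$ from \eqref{abc}--\eqref{efg}, and simplify; the companion identity \eqref{K} for $K$ is obtained in the same way, starting instead from $K=(ac-b^2)+(eg-f^2)$.

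What makes the simplification tractable is a structural feature of \eqref{abc}--\eqref{efg}. Writing $p_{ij}=\hat E\,\psi_{ij}-\hat F\,\varphi_{ij}$ for $i,j\in\{x,y\}$, the coefficients $e,f,g$ are obtained from $a,b,c$ by replacing every $\varphi_{ij}$ with $p_{ij}$ and inserting one extra factor $W^{-1/2}$: if $A[h],B[h],C[h]$ denote the three fixed linear combinations of $h_{xx},h_{xy},h_{yy}$ (with coefficients free of $\varphi,\psi$) for which $a=A[\varphi]$, $b=B[\varphi]$, $c=C[\varphi]$, then $e=W^{-1/2}A[p]$, $f=W^{-1/2}B[p]$, $g=W^{-1/2}C[p]$. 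Hence
\[
\kappa=\frac{1}{\sqrt W}\Bigl( (A[\varphi]-C[\varphi])\,B[p]-(A[p]-C[p])\,B[\varphi] \Bigr),
\]
and since $A,B,C$ are linear and $p=\hat E\psi-\hat F\varphi$, expanding $p$ makes the terms in $\hat F$ cancel identically and factors out a single $\hat E$; the surviving bracket carries a compensating $\hat E^{-1}$, because each of $A,B,C$ contributes a factor $\hat E^{-1/2}$. So every trace of $\hat E,\hat F$ disappears and one is left with a bilinear expression in the Hessians of $\varphi$ and $\psi$, skew under their interchange, whose coefficients are polynomial in $E,F,G$ and $W$.

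It only remains to identify that expression. On expanding the bracket, the sole $2\times 2$ minors that survive are $N=\varphi_{xx}\psi_{xy}-\varphi_{xy}\psi_{xx}$, $M=\varphi_{xx}\psi_{yy}-\varphi_{yy}\psi_{xx}$ and $L=\varphi_{xy}\psi_{yy}-\varphi_{yy}\psi_{xy}$; using $W=EG-F^2$ to absorb the spurious $F^2$ terms, their combined coefficient collapses to $E^2(EL-FM+GN)$ against an overall prefactor $E^{-2}W^{-2}$, which is exactly \eqref{N}. I expect the only genuine difficulty to be bookkeeping: keeping the half-integer powers of $\hat E$ and $W$ straight through the substitution, and matching the signs of $L,M,N$ to the determinant conventions in the statement. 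As an independent check one can instead compute the normal connection form $\omega_{34}$ of the adapted frame and extract $\kappa$ from $d\omega_{34}$ via the structure equations, or simply appeal to the equivalent formulas in \cite{am1,am2}.
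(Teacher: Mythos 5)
Your proposal is correct and follows essentially the same route as the paper, which obtains \eqref{N} by substituting the coefficient expressions \eqref{abc}--\eqref{efg} into $\kappa=(a-c)f-(e-g)b$ from \eqref{curvatures} (or by citing \cite{am1,am2}); your structural observations --- that $e,f,g$ arise from $a,b,c$ via $\varphi_{ij}\mapsto \hat E\psi_{ij}-\hat F\varphi_{ij}$ with an extra factor $W^{-1/2}$, so the $\hat F$-terms cancel by antisymmetry and the remaining bracket collapses to $E^2(EL-FM+GN)$ against the prefactor $E^{-2}W^{-2}$ --- check out exactly. Nothing further is needed.
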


\section{Asymptotic directions}\label{Asymptotic directions}

The \emph{curvature ellipse} or \emph{indicatrix} $\mathcal E$ of the surface $S$ is the image under the second fundamental form of the unit circle in the tangent space:
\[
\mathcal E_p=\{v\in N_pS \ | \ v=\sff(u), \ u\in T_pS, \ |u|=1\}
\]

\begin{prop}\cite{little}
The normal curvature $\kappa$ is related to the oriented area $A$ of the curvature ellipse by:
\beq
\dfrac{\pi}{2}\kappa=A
\eeq
\end{prop}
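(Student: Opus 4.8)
The plan is to write the curvature ellipse explicitly in the orthonormal normal frame $\{e_3,e_4\}$, recognise it as the affine image of a round circle, and read off its oriented area directly.

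First I would parametrise the unit circle in $T_pS$ by $u(\theta)=\cos\theta\,e_1+\sin\theta\,e_2$, $\theta\in[0,2\pi)$. Since $\{\omega_1,\omega_2\}$ is the basis dual to $\{e_1,e_2\}$, we have $\omega_1(u(\theta))=\cos\theta$ and $\omega_2(u(\theta))=\sin\theta$, so evaluating the quadratic forms \eqref{sff1}--\eqref{sff2} on $u(\theta)$ and applying the double-angle identities gives
\[
\sff_1(u(\theta))=\tfrac{a+c}{2}+\tfrac{a-c}{2}\cos2\theta+b\sin2\theta,\qquad
\sff_2(u(\theta))=\tfrac{e+g}{2}+\tfrac{e-g}{2}\cos2\theta+f\sin2\theta .
\]
Hence, setting $\phi=2\theta$ and using the coordinates $(e_3,e_4)$ on $N_pS$, the curvature ellipse is the image of the closed curve
\[
\phi\longmapsto \mathcal H+\cos\phi\;v_1+\sin\phi\;v_2,\qquad
v_1=\Bigl(\tfrac{a-c}{2},\,\tfrac{e-g}{2}\Bigr),\quad v_2=(b,f),
\]
centred at the mean curvature vector $\mathcal H$ of \eqref{curvatures}. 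As $\theta$ runs once over $S^1$ the parameter $\phi$ runs twice, so the unit tangent circle double-covers $\mathcal E_p$ (equivalently: $\sff$ is quadratic, so antipodal unit vectors have the same image), and the oriented area $A$ of $\mathcal E_p$ is that enclosed by a single turn $\phi\in[0,2\pi)$ of this curve.

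That single turn is the image of the unit circle under the affine map $x\mapsto\mathcal H+x_1v_1+x_2v_2$, whose linear part has determinant $\det[v_1\,|\,v_2]$, so its oriented area is $A=\pi\,\det[v_1\,|\,v_2]$; this stays valid, returning $A=0$, in the degenerate cases where $\mathcal E_p$ is a segment or a point. Finally
\[
\det[v_1\,|\,v_2]=\tfrac{a-c}{2}\,f-\tfrac{e-g}{2}\,b=\tfrac12\bigl((a-c)f-(e-g)b\bigr)=\tfrac12\,\kappa
\]
by the expression for the normal curvature in \eqref{curvatures}, whence $A=\tfrac{\pi}{2}\kappa$. The only genuinely delicate point is the bookkeeping of the factor $2$: one must be consistent in taking $A$ to be the area of $\mathcal E_p$ traversed once rather than the area swept by $u\mapsto\sff(u)$ over all of $S^1$, which would double the constant. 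No analytic difficulty arises beyond this.
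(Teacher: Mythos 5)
Your argument is correct: the parametrisation $\theta\mapsto\bigl(\sff_1(u(\theta)),\sff_2(u(\theta))\bigr)$, the reduction via double-angle formulas to the affine image $\phi\mapsto\mathcal H+\cos\phi\,v_1+\sin\phi\,v_2$ of a circle, and the identification $\det[v_1\,|\,v_2]=\tfrac12\kappa$ with $\kappa=(a-c)f-(e-g)b$ all check out, and you handle the one real pitfall (the double cover $\phi=2\theta$) explicitly. The paper quotes this proposition from Little without proof, and your computation is essentially the standard derivation found there, so there is nothing further to reconcile.
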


The curvature ellipse at a point  $p\in S$ can be used to  characterize that point; in particular:
\begin{itemize}
\item  $p$ is a \emph{circle point} if the curvature ellipse at $p$ is a circumference.
\item $p$ is a \emph{minimal point} if the curvature ellipse at $p$ is centred at the origin, $\mathcal H(p)=0$.
\item $p$ is an \emph{umbilic point} if the curvature ellipse at $p$ is a circumference centred at the origin; the point is both a minimal and a circle point.
\end{itemize}

By identifying $p$ with the origin of $N_p S$,  the points of $S$ may be classified according to their position with respect to the curvature ellipse, that we assume to be non degenerate ($\kappa(p)\ne 0$), as follows:

\begin{itemize}
\item  $p$ lies outside the curvature ellipse.

\noindent
The point is said to be a
\emph{hyperbolic point} of $S$. The \emph{asymptotic directions} are the tangent directions whose images span the two normal lines tangent to the indicatrix passing through the origin;   the \emph{binormals} are the normal directions perpendicular  to those normal lines.

\item $p$ lies inside the curvature ellipse.

\noindent
The point $p$  is an \emph{elliptic point}. There are no binormals and no asymptotic directions.

\item $p$ lies on the curvature ellipse.

\noindent
The point $p$ is a \emph{parabolic point}. There is  one binormal and one asymptotic direction.

\end{itemize}

The points $p$ where the curvature ellipse passes through the origin are characterised by $\Delta(p)=0$~\cite{little}, where:
\beq
\Delta=\dfrac{1}{4}\left|
\begin{matrix}
a&2b&c&0\\
e&2f&g&0\\
0&a&2b&c\\
0&e&2f&g
\end{matrix}
\right|
\eeq
In fact, $\Delta$ is the resultant of the two polynomials $ax^2+2bxy+cy^2$ and $ex^2+2fxy+gy^2$. If $\sff(u)=0$ those polynomials have a common root $(u_1,u_2)$, and their resultant has to be zero.

The points of $S$ may be classified using $\Delta$, as follows~\cite{little}:

\begin{prop}
If a pont $p\in S$ is hyperbolic, parabolic or elliptic then  $\Delta(p) < 0$, $\Delta(p)=0$ or $\Delta(p) > 0$, respectively.
\end{prop}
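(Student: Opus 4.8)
The plan is to use the invariance of $\Delta$ under changes of the adapted frame to reduce the statement to one transparent computation. First I would record that $\Delta$ is unchanged by an orthogonal change of the tangent frame and by an orthogonal change of the normal frame. Since $\Delta$ is a fixed multiple of the resultant of the binary quadratics $\sff_1$ and $\sff_2$, and an orthogonal change of $(e_1,e_2)$ subjects both $\sff_1$ and $\sff_2$ to one and the same linear substitution of determinant $\pm1$ in $(\omega_1,\omega_2)$, the resultant is multiplied by $(\pm1)^{4}=1$; an orthogonal change of $(e_3,e_4)$ replaces the pair $(\sff_1,\sff_2)$ by an orthogonal linear combination of $\sff_1$ and $\sff_2$, which multiplies the resultant by $(\pm1)^{2}=1$. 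Hence, to prove the proposition at a point $p$ with non degenerate indicatrix, I may choose the frame freely.

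Next I would bring the indicatrix to standard position. Writing $\sff_i(\cos\theta,\sin\theta)$ by means of the double angle formulae, the indicatrix is the ellipse $t\mapsto\mathcal H+\cos t\,V_1+\sin t\,V_2$, where $\mathcal H$ is the mean curvature vector, $V_1=\tfrac12(a-c,e-g)$ and $V_2=(b,f)$; rotating the tangent frame merely reparametrises $\theta$ and hence rotates the pair $(V_1,V_2)$ in its plane, while an orthogonal change of the normal frame rotates $V_1,V_2$ and $\mathcal H$ simultaneously inside $N_pS$. Using these, I may assume $V_1=(r_1,0)$, $V_2=(0,r_2)$ with $r_1,r_2>0$ the semi axes of the ellipse (non degeneracy is used here) and $\mathcal H=(p_1,p_2)$. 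Reading off the coefficients, in this frame
\beq
\mathcal M_1=\left[\begin{matrix}p_1+r_1 & 0\\ 0 & p_1-r_1\end{matrix}\right],\qquad
\mathcal M_2=\left[\begin{matrix}p_2 & r_2\\ r_2 & p_2\end{matrix}\right],
\eeq
that is $a=p_1+r_1$, $b=0$, $c=p_1-r_1$, $e=g=p_2$, $f=r_2$. Substituting into the $4\times4$ determinant defining $\Delta$ and expanding gives
\beq
\Delta=r_1^2r_2^2-r_2^2p_1^2-r_1^2p_2^2=r_1^2r_2^2\left(1-\frac{p_1^2}{r_1^2}-\frac{p_2^2}{r_2^2}\right).
\eeq
As $r_1^2r_2^2>0$, the sign of $\Delta$ is that of $1-\dfrac{p_1^2}{r_1^2}-\dfrac{p_2^2}{r_2^2}$, which is positive, zero or negative precisely according as the point $(-p_1,-p_2)$ --- the origin written in coordinates centred at $\mathcal H$ --- lies inside, on or outside the ellipse $\tfrac{X^2}{r_1^2}+\tfrac{Y^2}{r_2^2}=1$. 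Thus $\Delta>0$, $\Delta=0$, $\Delta<0$ precisely when $p$ is elliptic, parabolic, hyperbolic, which is the assertion. As a sanity check, for the indicatrix equal to the unit circle centred at the origin one has $\sff_1=\omega_1^2-\omega_2^2$, $\sff_2=2\,\omega_1\omega_2$, so $p_1=p_2=0$, $r_1=r_2=1$, and indeed $\Delta=1>0$, the origin being interior.

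I do not anticipate a genuine obstacle: the proof is the frame reduction followed by a two line determinant expansion. The care needed is administrative --- verifying the two transformation rules for the resultant so that the reduction is legitimate; keeping in force the hypothesis that the indicatrix be non degenerate ($\kappa(p)\ne0$, equivalently $r_1r_2>0$), which is exactly what makes the trichotomy inside/on/outside meaningful and the displayed formula for $\Delta$ sign determining; and computing $\Delta$ from its $4\times4$ determinant definition (rather than from the word ``resultant'', whose normalisation is sign ambiguous) so that the inequalities come out in the stated direction. The only real idea is the observation that, once the indicatrix is normalised, $\Delta$ is, up to the manifestly positive factor $r_1^2r_2^2$, the classical expression $1-\frac{p_1^2}{r_1^2}-\frac{p_2^2}{r_2^2}$ that locates a point relative to an ellipse.
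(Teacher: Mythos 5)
Your proof is correct; note that the paper itself offers no argument for this proposition --- it is quoted from \cite{little}, the only hint given being that $\Delta$ is the resultant of $\sff_1$ and $\sff_2$, which explains why $\Delta$ vanishes when the ellipse passes through the origin but says nothing about the sign trichotomy. Your route supplies the missing content. The two invariance statements are the right ones and the exponents are as you say ($(\det T)^{4}$ for a substitution in $(\omega_1,\omega_2)$, $(\det T)^{2}$ for a linear recombination of the pair $(\sff_1,\sff_2)$; both even, so orthogonal changes preserve $\Delta$ exactly, not merely up to sign), the passage from conjugate semi-diameters $V_1=\tfrac12(a-c,e-g)$, $V_2=(b,f)$ to principal axes by a shift of the parameter $t=2\theta$ is legitimate, and I have checked the $4\times4$ expansion: with $a=p_1+r_1$, $b=0$, $c=p_1-r_1$, $e=g=p_2$, $f=r_2$ one indeed gets $\Delta=r_1^2r_2^2-r_2^2p_1^2-r_1^2p_2^2$, whose sign locates the origin relative to the ellipse as claimed. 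Your decision to compute from the paper's $4\times4$ determinant rather than from an abstractly normalised resultant is the right precaution, since the row ordering in the paper's $\Delta$ differs from the standard Sylvester matrix by a transposition. The only looseness is the phrase ``rotates the pair $(V_1,V_2)$ in its plane'': a reflection of the tangent frame acts by $t\mapsto -t+\mathrm{const}$, hence by an orthogonal but possibly orientation-reversing map on $(V_1,V_2)$; this is harmless because rotations alone suffice for your normalisation. What your approach buys over the bare citation is an explicit closed form for $\Delta$ at a nondegenerate point, which also makes transparent the borderline facts the paper uses later (e.g.\ that a minimal point with $r_1r_2\ne0$ has $\Delta>0$).
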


We can  extend the definition of hyperbolic point, respectively elliptic point and parabolic point, to the case where $\kappa(p)=0$ by means of $\Delta$, as $\Delta(p)<0$, respectively $\Delta(p)>0$ and $\Delta(p)=0$.

\begin{defn}
The second-order \emph{osculating space} of the surface $S$ at $p\in S$ is the space generated by all vectors $\gamma'(0)$ and $\gamma''(0)$ where $\gamma$ is a curve through $p$ parametrized by arc length from $p$. An \emph{inflection point} is a point where the dimension of the osculating space is not maximal.
\end{defn}

\begin{prop}\cite{little} The following conditions are equivalent:
\begin{itemize}
\item $p\in S$ is an inflection point.
\item $p\in S$ is a point of intersection of $\Delta=0$ and $\kappa=0$.
\item ${\rank}\mathcal M(p) \le 1$
\end{itemize}
The inflection points are  singular points of $\Delta=0$.
\end{prop}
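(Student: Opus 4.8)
The plan is to route the chain of equivalences through the matrix $\mathcal M$ of the second fundamental form, i.e.\ the $2\times 3$ matrix with rows $(a,b,c)$ and $(e,f,g)$. First I would identify the osculating space with $T_pS$ together with the linear span of the values of $\sff$, matching its dimension with $2+\rank\mathcal M$; then I would translate the pair of conditions $\Delta(p)=0$, $\kappa(p)=0$ into the vanishing of all $2\times2$ minors of $\mathcal M(p)$.

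For the first step: the osculating space at $p$ is spanned by the vectors $\gamma'(0)$ and $\gamma''(0)$ as $\gamma$ runs over unit-speed curves through $p$. The $\gamma'(0)$ already fill $T_pS$, while $\gamma''(0)$ has tangential part in $T_pS$ and normal part equal to $\sff(\gamma'(0))$; hence the osculating space is $T_pS\oplus V$, where $V\subseteq N_pS$ is the linear span of $\{\sff(u):u\in T_pS\}$, and since $T_pS\cap N_pS=\{0\}$ its dimension is $2+\dim V$. Writing $u=(x,y)$ one has $\sff(x,y)=\mathcal M\,(x^2,2xy,y^2)^{\top}$, and as $(x,y)$ varies the vectors $(x^2,2xy,y^2)$ span all of $\Rt$ (a nonzero linear relation among $x^2,xy,y^2$ being impossible), so $V$ is exactly the column space of $\mathcal M$ and $\dim V=\rank\mathcal M$. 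Thus the osculating space attains its maximal dimension $4$ precisely when $\rank\mathcal M=2$, which gives the equivalence of the first and third conditions.

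Next I would show that $\rank\mathcal M(p)\le1$ holds if and only if $\Delta(p)=0$ and $\kappa(p)=0$. One direction is immediate: rank at most one means $\sff_1$ and $\sff_2$ are proportional quadratic forms (or one of them vanishes), hence share all their roots, so the resultant $\Delta$ vanishes; and substituting $(a,b,c)=\lambda(e,f,g)$ in $\kappa=(a-c)f-(e-g)b$ gives $\kappa=0$. For the converse, suppose $\Delta(p)=0$ and $\kappa(p)=0$. The vanishing of $\Delta$ means $\sff_1,\sff_2$ have a common root in $\Cpu$; if one form vanishes identically, or if the two are proportional, then $\rank\mathcal M\le1$ and we are done, so I may assume there is a unique common root $[\alpha:\beta]$, which — being the only common root of a pair of forms with real coefficients — is fixed by complex conjugation and hence real. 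Since the three conditions are intrinsic (the existence of a common root, the degeneracy of the curvature ellipse, and the dimension of the span of $\sff_1,\sff_2$ in the space of binary quadratics are all independent of the orthonormal tangent frame), I may rotate $(e_1,e_2)$ to move this common root to $[1:0]$; then $a=\sff_1(1,0)=0$ and $e=\sff_2(1,0)=0$, so $\mathcal M=\left[\begin{matrix}0&b&c\\0&f&g\end{matrix}\right]$, whence $\kappa=bg-cf$ and $\rank\mathcal M\le1\Leftrightarrow bg-cf=0\Leftrightarrow\kappa=0$. I expect this reduction — producing a real common root and invoking frame invariance — to be the only genuinely delicate point; everything else is bookkeeping.

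Finally, for the assertion that inflection points are singular points of $\{\Delta=0\}$, I would expand the $4\times4$ determinant defining $\Delta$ into the form
\[
\Delta=(ac-b^2)(eg-f^2)-\tfrac14(ag+ce-2bf)^2=\langle v_1,v_1\rangle\langle v_2,v_2\rangle-\langle v_1,v_2\rangle^2,
\]
where $v_1=\bigl(\tfrac{a+c}{2},\tfrac{a-c}{2},b\bigr)$, $v_2=\bigl(\tfrac{e+g}{2},\tfrac{e-g}{2},f\bigr)$ and $\langle(h,x,y),(h',x',y')\rangle=hh'-xx'-yy'$ (so that $\langle v_i,v_i\rangle=\det\mathcal M_i$): that is, $\Delta$ is the Gram determinant of $v_1,v_2$ for a fixed bilinear form. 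A Gram determinant, viewed as a polynomial in the entries of its arguments, vanishes to order at least two along the locus where those arguments are linearly dependent, so all first partial derivatives of $\Delta$ in $a,\dots,g$ vanish there; since an inflection point is precisely a point with $v_1\parallel v_2$, and the coefficients $a,\dots,g$ depend smoothly on $p$, this gives $d\Delta(p)=0$, and together with $\Delta(p)=0$ it shows that $p$ is a singular point of the curve $\{\Delta=0\}$. The same identity also gives a frame-free proof of the converse above: since $\kappa=2(x_1y_2-x_2y_1)$ is twice the determinant formed by the projections of $v_1$ and $v_2$ to the $(x,y)$-plane, it suffices to observe that if $v_1$ and $v_2$ are linearly independent but $\Delta=0$, their span is a degenerate $2$-plane for $\langle\cdot,\cdot\rangle$, which therefore meets the $h$-axis only at $0$ and so projects isomorphically onto the $(x,y)$-plane — whence $x_1y_2-x_2y_1\ne0$, i.e.\ $\kappa\ne0$.
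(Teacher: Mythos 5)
The paper itself offers no proof of this proposition---it is quoted from Little with a citation---so there is nothing in the text to measure your argument against; judged on its own, your proof is correct and complete. All three pillars check out. (i) The osculating space is indeed $T_pS\oplus \mathrm{Im}\,\mathcal M$: since no nonzero linear combination of $x^2, xy, y^2$ vanishes identically, the vectors $(x^2,2xy,y^2)$ span $\Rt$, so the normal parts $\sff(\gamma'(0))$ fill exactly the column space of $\mathcal M$ and the dimension is $2+{\rank}\mathcal M$. (ii) The reduction of ``$\Delta=0$ and $\kappa=0$'' to ${\rank}\mathcal M\le 1$ via a real common root rotated to $[1:0]$ is sound: two non-proportional binary quadratics share at most one point of $\Cpu$, which is then conjugation-invariant hence real, and ${\rank}\mathcal M$, the vanishing of $\Delta$ and the vanishing of $\kappa$ are all unaffected by a rotation of the tangent frame. (iii) Your identity $\Delta=(ac-b^2)(eg-f^2)-\tfrac14(ag+ce-2bf)^2$ agrees exactly with the paper's $\tfrac14\det$ convention (I expanded both sides), the map $(a,b,c)\mapsto(\tfrac{a+c}{2},\tfrac{a-c}{2},b)$ is a linear isomorphism so ``$v_1\parallel v_2$'' is the same as ${\rank}\mathcal M\le 1$, and the first partials of the Gram determinant in the entries of $v_1,v_2$ do vanish identically on the locus of dependent pairs, which with the chain rule gives $d\Delta(p)=0$. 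The closing observation---that a $2$-plane on which the signature-$(1,2)$ form degenerates cannot contain the positive $h$-axis and therefore projects isomorphically onto the $(x,y)$-plane, forcing $\kappa\ne0$ whenever $\Delta=0$ but ${\rank}\mathcal M=2$---is a genuinely elegant frame-free substitute for the rotation argument and is the most distinctive part of your write-up. The only blemish is the phrase ``vanishes to order at least two along the locus,'' which is loose for a singular determinantal locus; what you actually need, and what is true, is simply that $\nabla\Delta$ in the variables $a,\dots,g$ vanishes at every dependent pair, so I would state it that way.
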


\begin{prop}\cite{dcc}
Let $p\in S$ be a generic inflection point. Then $p$ is a Morse singular point of $\Delta=0$,  and the Hessian $H_{\Delta}$ of $\Delta$ at $p$ has the same sign as the curvature $K(p)$.
\end{prop}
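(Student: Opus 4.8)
The plan is to bring the second fundamental form into a normal form at $p$, compute the $2$-jet of $\Delta$ there, and recognise its Hessian as an explicit nonnegative multiple of $K(p)$.

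First I would normalise. Put $p$ at the origin. Because $\rank\mathcal M(p)\le 1$ the forms $\sff_1(p)$ and $\sff_2(p)$ are proportional, so a rotation of the normal frame $\{e_3,e_4\}$ may be chosen so that $\sff_2(p)=0$, that is $e(p)=f(p)=g(p)=0$; at a generic inflection point $\sff_1(p)\ne0$, so $(a_0,b_0,c_0):=(a(p),b(p),c(p))\ne(0,0,0)$. Choosing coordinates $(x,y)$ on $S$ in which the first fundamental form is Euclidean to first order at $p$, all of $E,F,G,\hat E,\hat F,\hat G,W$ differ from their flat values by $O(2)$, so \eqref{abc}--\eqref{efg} give $a=\varphi_{xx}+O(2)$, $e=\psi_{xx}+O(2)$, and so on. In particular the linear parts at $p$ of the functions $e,f,g$ coincide with those of $\psi_{xx},\psi_{xy},\psi_{yy}$ and are built from the third derivatives of $\psi$; call them $\ell_e,\ell_f,\ell_g$. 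Finally, since $\mathcal M_2(p)=0$, \eqref{curvatures} gives $K(p)=a_0c_0-b_0^{2}$.

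Next I would expand $\Delta$. Expanding the Sylvester-type determinant defining $\Delta$ yields
\[
\Delta=-\frac14\Bigl[(ag-ce)^2-4(af-be)(bg-cf)\Bigr],
\]
the overall sign being pinned down by the convention that hyperbolic points satisfy $\Delta<0$. Since $e,f,g$ vanish at $p$, each of the three factors $ag-ce$, $af-be$, $bg-cf$ vanishes at $p$; hence $\Delta$ and its differential vanish at $p$ (re-proving that $p$ is a singular point of $\Delta=0$), and the quadratic part of $\Delta$ at $p$ — the metric factors and the non-constant part of $\sff_1$ entering only at order $\ge 3$ — is
\[
\Delta_2=-\frac14\Bigl[(a_0\ell_g-c_0\ell_e)^2-4(a_0\ell_f-b_0\ell_e)(b_0\ell_g-c_0\ell_f)\Bigr].
\]
Computing the discriminant (the Hessian determinant $H_\Delta$) of this binary quadratic form, one finds
\[
H_\Delta=\mathcal J^{\,2}\,(a_0c_0-b_0^{2})=\mathcal J^{\,2}\,K(p),\qquad
\mathcal J=\left|\begin{matrix} a_0 & b_0 & c_0\\ e_x & f_x & g_x\\ e_y & f_y & g_y\end{matrix}\right|_{p}.
\]
Indeed, writing the three linear forms appearing in $\Delta_2$ as $p_1x+p_2y$, $q_1x+q_2y$, $r_1x+r_2y$, one checks $p_1q_2-p_2q_1=-a_0\mathcal J$, $p_1r_2-p_2r_1=c_0\mathcal J$ and $q_1r_2-q_2r_1=b_0\mathcal J$, after which the discriminant of $\Delta_2$ collapses to $\mathcal J^{\,2}(a_0c_0-b_0^{2})$.

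It then remains to invoke genericity. At a generic inflection point $K(p)\ne0$ and $\mathcal J\ne0$: generically the inflection points are the isolated intersection of the curves $\Delta=0$ and $\kappa=0$, and requiring in addition any one of $K(p)=0$, $\mathcal J=0$, $\sff_1(p)=0$ is a further condition of positive codimension that a generic surface avoids at every such point. Hence $H_\Delta=\mathcal J^{\,2}K(p)\ne0$, so $\Delta=0$ has a Morse singularity at $p$ and $H_\Delta$ has the same sign as $K(p)$. The main obstacle is the algebraic identity $H_\Delta=\mathcal J^{\,2}K(p)$: one has to see that the expansion of the resultant genuinely collapses into a perfect square times the Gaussian curvature, and — because only the sign is asserted — one must be careful with the sign of the Sylvester determinant so that the answer comes out as $+K(p)$ rather than $-K(p)$; one must also verify that the discarded metric and cubic corrections are truly $O(3)$ at $p$, so that they do not affect the $2$-jet of $\Delta$.
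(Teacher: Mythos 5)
Your argument is correct, and I can verify the key identity: with the normal frame rotated so that $\sff_2(p)=0$ and with $\Delta=-\tfrac14\bigl[(ag-ce)^2-4(af-be)(bg-cf)\bigr]$ (the sign forced by $\Delta<0$ at hyperbolic points, and confirmed by expanding the $4\times4$ determinant), the quadratic part of $\Delta$ does collapse to a form with Hessian determinant $\mathcal J^{\,2}(a_0c_0-b_0^2)=\mathcal J^{\,2}K(p)$, via the three bracket identities you state. Note that the paper itself gives no proof of this proposition — it is quoted from the reference [dcc] — so there is no in-text argument to compare against; your route (normalising the normal frame, reducing to the $2$-jet of the resultant, and exhibiting the Hessian as a square times $K(p)$, with $\mathcal J\ne 0$ and $K(p)\ne 0$ as the genericity conditions) is essentially the standard one used there.
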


When $\Delta(p)=0$ we can distinguish among the following possibilities:

\begin{itemize}
\item $\Delta(p)=0$, $K(p) < 0 $ and ${\rank}\mathcal M(p) = 2$

\noindent
The curvature  ellipse is non-degenerate, $\kappa(p)\ne 0$; the binormal is the normal  at $p$.

\item $\Delta(p) = 0$, $K(p) < 0 $ and ${\rank}\mathcal M(p) = 1$

\noindent
$p$ is an \emph{inflection point
of real type}: the curvature ellipse is a radial segment and $p$ does not belong to it, $\kappa(p)= 0$. The point $p$ is a self-intersection point of $\Delta=0$, as $H_{\Delta}(p)<0$.
\item $\Delta(p) = 0$, $K(p)=0$

\noindent
$p$ is an \emph{inflection point of flat type}: the curvature ellipse is a radial segment and $p$  belongs to its boundary, $\kappa(p)= 0$.
\item $\Delta(p) = 0$, $K(p) > 0$

\noindent
$p$ is an \emph{inflection point of imaginary type}: the curvature ellipse is a radial segment and $p$ belongs to its interior, $\kappa(p)= 0$. The point $p$ is an isolated point of $\Delta=0$, as $H_{\Delta}(p)>0$.

\end{itemize}

At an inflection point the normal line perpendicular to the line through the origin containing the radial segment defines the binormal.

\begin{rem}
If $\Delta(p) = 0$ and $K(p)\ge 0$ then ${\rank}\mathcal M(p)\le 1$.
\end{rem}

\begin{rem}\label{flat}
It can be shown that for an open and dense set of embeddings of $S$ in $\mathbf R^4$, $\Delta^{-1}(0) \cup
K^{-1}(0) = \emptyset$; therefore on a generic surface there are no inflection points of flat type.
\end{rem}

The \emph{height function} on $S$ corresponding to $b\in\mathbf R^4$ is the map  $f_b(p)=f(p,b)$, where:
\[
f:S\times \mathbf R^4\longrightarrow\mathbf R,\quad f(p,b)=p\cdot b
\]

\begin{prop}\cite{dcc}
The critical points of $f$ are exactly the points of the normal space $NS$. Moreover:
\begin{itemize}
\item If $\Delta(p)>0$, then $f_b(p)=f(p,b)$ has a non degenerate critical point  at $p$ for all $b\in N_pS$.
\item If $\Delta(p)<0$, then $f_b(p)=f(p,b)$ has a   degenerate critical point at $p$ for exactly two independent normal directions.
\item If $\Delta(p)=0$, then $f_b(p)=f(p,b)$ has a  degenerate critical point at $p$  for exactly one normal direction.
\end{itemize}
\end{prop}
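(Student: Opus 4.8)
The plan is to deduce both assertions from the standard relation between the height functions on $S$ and its second fundamental form. First I would identify the critical points: one has $\de(f_b\circ\Xi)_p=b\cdot D\Xi(p)$, which annihilates $\Rd$ exactly when $b$ is orthogonal to $\mathrm{Im}\,D\Xi(p)=T_pS$, so the critical set of $f$ is $\{(p,b):b\in N_pS\}=NS$. At such a point, differentiating once more gives $\He(f_b\circ\Xi)_p=(b\cdot\Xi_{ij})_{ij}$; by the Gauss formula the tangential component of $\Xi_{ij}(p)$ is annihilated by $b$ because $b\perp T_pS$, while the normal component is the matrix of $\sff$, so $\He(f_b\circ\Xi)_p$ is, as a quadratic form on $T_pS$, exactly $v\mapsto b\cdot\sff_p(v)=b_3\mspace{1mu}\sff_1(v)+b_4\mspace{1mu}\sff_2(v)$ for $b=b_3e_3+b_4e_4$. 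Up to a congruence coming from the change between the orthonormal tangent frame and $\{\Xi_x,\Xi_y\}$ --- which alters neither the rank nor the sign of the determinant --- this form is represented by $b_3\mathcal M_1+b_4\mathcal M_2$, so $p$ is a degenerate critical point of $f_b$ precisely when $\det(b_3\mathcal M_1+b_4\mathcal M_2)=0$.

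It then remains to count the normal directions $b\ne0$ annihilating the binary quadratic $q(b_3,b_4):=\det(b_3\mathcal M_1+b_4\mathcal M_2)$. The cleanest way is geometric: $q(b_3,b_4)=0$ says that $v\mapsto b\cdot\sff_p(v)$ has rank $\le1$, hence is semidefinite with a non-trivial zero, which means $b\cdot w\ge 0$ (or $\le 0$) for every $w$ on the curvature ellipse $\mathcal E_p$ with equality somewhere; that is, the line $b^{\perp}\subset N_pS$ supports $\mathcal E_p$, and --- $\mathcal E_p$ being a nondegenerate ellipse --- is a tangent line to it. Since $b^{\perp}$ passes through $p$, the number of such $b$, up to scaling and sign, equals the number of tangent lines to $\mathcal E_p$ issuing from $p$, which is $2$, $1$ or $0$ according as $p$ lies outside, on, or inside $\mathcal E_p$; by the Propositions already recalled this is exactly $\Delta(p)<0$, $\Delta(p)=0$, $\Delta(p)>0$, giving the three stated cases. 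Equivalently, algebraically, $q=K_1b_3^2+(ag+ce-2bf)\mspace{1mu}b_3b_4+K_2b_4^2$, and its discriminant $(ag+ce-2bf)^2-4K_1K_2$ equals the resultant of $\sff_1$ and $\sff_2$, which by the description of $\Delta$ above is, up to a positive constant, $-\Delta$; so the number of real zeros of $q$ is $2$, $1$ or $0$ according to the sign of $\Delta$.

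The one step that is not routine will be this last identification --- the polynomial identity relating the discriminant of $q$ to the resultant of the pair $(\sff_1,\sff_2)$ that defines $\Delta$, equivalently the geometric fact that degeneracy of the Hessian is tangency of $b^{\perp}$ to $\mathcal E_p$; the rest is the familiar "Hessian of a height function is the second fundamental form contracted with the normal direction". I should also record that the count uses $q\not\equiv 0$, i.e. that $\mathcal E_p$ is nondegenerate, $\kappa(p)\ne 0$ --- the standing hypothesis under which the classification recalled above is made. At a flat inflection point one has $q\equiv 0$ and every normal direction gives a degenerate critical point, a case the statement does not address.
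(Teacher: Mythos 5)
Your argument is correct. Note that the paper does not prove this proposition at all --- it is imported verbatim from \cite{dcc} --- so there is no in-text proof to compare against; what you give is the standard contact-theoretic argument, and it checks out. The two pillars are both sound: the Hessian of $f_b\circ\Xi$ at a critical point is (up to a congruence, which preserves rank and the sign of the determinant) $b_3\mathcal M_1+b_4\mathcal M_2$, and the counting reduces to the real roots of $q(b_3,b_4)=\det(b_3\mathcal M_1+b_4\mathcal M_2)$. Your identification of $\mathrm{disc}(q)=(ag+ce-2bf)^2-4K_1K_2$ with $(ag-ce)^2-4(af-be)(bg-cf)$ is a genuine polynomial identity (both expand to $a^2g^2+c^2e^2-2aceg-4abfg-4bcef+4acf^2+4b^2eg$), and this quantity is $-4\Delta$ with the paper's normalization of the Sylvester determinant, so the signs come out as stated. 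Your caveat is also apt and worth keeping: the third bullet as stated silently assumes $p$ is not an inflection point, since at any inflection point (not only flat ones --- $\mathrm{rank}\,\mathcal M\le 1$ already forces the whole pencil $b_3\mathcal M_1+b_4\mathcal M_2$ to be singular) one has $q\equiv 0$ and every normal direction is degenerate; this is consistent with the paper's later assertion that inflection points are umbilic or higher singularities of the height functions.
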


The surface $S$ has a higher order contact with the hyperplane normal to $b$ containing the tangent plane to $S$ at $p$, and as remarked in \cite{dcc} this shows that the binormal for a surface in $\Rq$ is an analogue to the binormal of a curve in $\Rt$. 

If the height function $f_b$ has a critical point at $p$, then $f_{\lambda b}$, with $\lambda>0$, has the same type of singular point at $p$; we will consider therefore the height map as being defined on $\mathbf S^3$:
\[
f:S\times \mathbf S^3\longrightarrow\mathbf R,\quad f(p,b)=f_b(p)=p\cdot b
\]
The critical points of $f$ are the points of the unit normal space $N^1S=\{(p,b)\mid p\in S, b\in N_pS, |b|=1\}$. 
If $f_b$ has a degenerate critical point at $p$, in general the kernel of  its second derivative defines a direction in the tangent space $T_pS$ (with the usual identifications), and that is an asymptotic direction:

\begin{prop}\label{binas}
Let $p\in S$ be a degenerate critical point of $f_b$ for some $b\in N_pS$,  $|b|=1$. If the kernel of $D^2f_b$ is one dimensional, it defines an asymptotic direction and $b$ defines a binormal.
\end{prop}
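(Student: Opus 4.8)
The plan is to work in the explicit coordinates set up earlier, with $\Xi(x,y)=(x,y,\varphi(x,y),\psi(x,y))$, $j^1\Phi(0)=0$, and $p$ the origin; the tangent plane at $p$ is the $(x,y)$-plane and the normal space is the $(0,0,*,*)$-plane. For a unit normal $b=(0,0,\beta_3,\beta_4)$ the height function is $f_b(x,y)=\beta_3\varphi(x,y)+\beta_4\psi(x,y)$, whose Hessian at the origin is $\beta_3\,\mathrm{Hess}\,\varphi(0)+\beta_4\,\mathrm{Hess}\,\psi(0)$. Since $j^1\Phi(0)=0$ we have $E=G=1$, $F=0$, $\hat E=\hat G=1$, $\hat F=0$ at the origin, so by the formulae \eqref{abc}--\eqref{efg} the Hessian of $f_b$ at $p$ is exactly $\beta_3\mathcal M_1+\beta_4\mathcal M_2$ (with $\mathcal M_1,\mathcal M_2$ evaluated at $p$). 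In intrinsic terms: identifying $T_pS$ with $\Rd$ via the frame, $D^2f_b$ is the bilinear form $u\mapsto \langle \sff(u),b\rangle = \beta_3\sff_1(u)+\beta_4\sff_2(u)$. This identification of $D^2f_b$ with the $b$-component of the second fundamental form is the conceptual core, and it holds frame-independently because the difference between the Euclidean Hessian and the covariant Hessian of $f_b$ involves only $\nabla f_b$, which vanishes at a critical point; I would state it at this level of generality.

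Next I would translate the two hypotheses. Saying $f_b$ has a \emph{degenerate} critical point at $p$ means $\det\big(\beta_3\mathcal M_1+\beta_4\mathcal M_2\big)=0$, i.e.\ the quadratic form $q_b:=\beta_3\sff_1+\beta_4\sff_2$ on $T_pS$ is degenerate; saying the kernel is \emph{one}-dimensional means $q_b\not\equiv 0$, so exactly one tangent direction $[u]$ satisfies $q_b(u)=0$ with $D^2f_b(u,\cdot)=0$. I then need two things: (i) $[u]$ is an asymptotic direction, and (ii) $b$ is a binormal, in the senses defined in Section \ref{Asymptotic directions}. For (i): the kernel condition $\beta_3\sff_1(u)+\beta_4\sff_2(u)=0$ says that the vector $\sff(u)\in N_pS$ is perpendicular to $b$, hence $\sff(u)$ lies on the line $b^\perp\subset N_pS$ through the origin. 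Because $q_b$ has a one-dimensional kernel, $u$ is the unique (up to scale) tangent vector with this property, so $\sff(u)$ spans that line, and as $u$ runs over the unit circle its image $\sff(u)$ traces the curvature ellipse $\mathcal E_p$; the line $\mathbb R\,\sff(u)=b^\perp$ is thus a normal line through the origin meeting the indicatrix, i.e.\ tangent to it in the hyperbolic case and secant/tangent in the degenerate cases — precisely the defining condition for $[u]$ to be an asymptotic direction. For (ii): $b$ is by construction the normal direction perpendicular to that tangent line $b^\perp$, which is exactly the definition of binormal.

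The one real subtlety — and the step I expect to be the main obstacle — is matching the "kernel direction" picture with the "tangent line to the ellipse" picture cleanly, including when $\kappa(p)=0$ (the extended definition via $\Delta$). I would handle it as follows: parametrise the ellipse by $u(\theta)=\cos\theta\,e_1+\sin\theta\,e_2$, so $\sff(u(\theta))$ is an affine-trigonometric curve in $N_pS$; the points of $\mathcal E_p$ on a given line $\ell$ through the origin correspond to solutions of a single linear equation in $(\cos 2\theta,\sin 2\theta, 1)$, and the tangency/secancy of $\ell$ to $\mathcal E_p$ is governed by the discriminant of that equation, which one checks equals (up to a positive factor) the quantity $\Delta(p)$ restricted appropriately — so the condition "$q_b$ has a nontrivial kernel" is equivalent to "$b^\perp$ meets $\mathcal E_p$" is equivalent to "$\Delta(p)\le 0$", and "$\dim\ker=1$" pins down a unique direction. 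Once this dictionary is in place the proposition is immediate; I would phrase the argument to avoid case-splitting on $\kappa(p)$ by appealing throughout to the $\Delta$-based definitions already recorded in the excerpt, citing \cite{little,dcc} for the equivalences where convenient rather than re-deriving them.
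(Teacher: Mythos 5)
Your proposal is correct in substance, and there is in fact nothing in the paper to compare it against: Proposition~\ref{binas} is stated without proof, as part of the contact--geometry dictionary imported from \cite{dcc} and \cite{little}. The core of your plan --- that at a critical point the Euclidean and covariant Hessians of $f_b$ agree, and that this Hessian is the $b$-component of the second fundamental form, $D^2f_b(u,v)=\langle\sff(u,v),b\rangle$, i.e.\ $\beta_3\mathcal M_1+\beta_4\mathcal M_2$ in the adapted frame where $b=\beta_3e_3+\beta_4e_4$ --- is exactly the right identification and is essentially the whole content of the statement.

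The one soft spot is the passage from ``$u$ spans the kernel'' to ``$b^{\perp}$ is tangent to the indicatrix''. You use only the diagonal condition $q_b(u)=0$ together with a counting argument (a line meeting a nondegenerate ellipse in exactly one point is tangent), which is why you are forced to defer the cases $\kappa(p)=0$, $\sff(u)=0$ and the degenerate indicatrix to a separate $\Delta$-dictionary. The polarized kernel condition disposes of all of this at once: $\langle\sff(u,v),b\rangle=0$ for \emph{all} $v$ says that both the point $\sff(u)$ and the velocity $2\,\sff(u,u^{\perp})$ of the curve $\theta\mapsto\sff(u(\theta))$ at that point are orthogonal to $b$, so the line $b^{\perp}$ contains $\sff(u)$ \emph{and} is tangent to the indicatrix there, with no nondegeneracy hypothesis on the ellipse. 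Equivalently, writing $u=(u_1,u_2)$ in the frame and letting $a,\dots,g$ be the coefficients of (\ref{sff}), the two kernel equations say that $(\beta_3,\beta_4)$ is orthogonal to both $(au_1+bu_2,\,eu_1+fu_2)$ and $(bu_1+cu_2,\,fu_1+gu_2)$; hence these two vectors are proportional, and expanding the $2\times 2$ determinant gives
\[
(af-be)\,u_1^2+(ag-ce)\,u_1u_2+(bg-cf)\,u_2^2=0,
\]
which is precisely equation (\ref{lasym}) defining the asymptotic directions, while $b$ is the common orthogonal direction in $N_pS$, i.e.\ the binormal. Replacing your uniqueness/tangency discussion by this two-line computation removes the only genuine gap in the plan and also covers the parabolic and inflection cases uniformly.
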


\begin{prop}\cite{dcc}
Let $p\in S$ be a parabolic point. If $p$ is not an  inflection point then: 
\begin{itemize}
\item $p$ is a fold singularity  of the height function when the asymptotic direction is not tangent to the line $\Delta^{-1}(0)$ of parabolic points.
\item $p$ is a cusp (or higher order) singularity when the asymptotic direction is  tangent to the line $\Delta^{-1}(0)$ of parabolic points.
\end{itemize}
\end{prop}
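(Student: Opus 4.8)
The plan is to reduce both alternatives to a single numerical condition --- the non-vanishing of $\psi_{xxx}(0)$, the leading coefficient of the height function along the asymptotic direction --- and to identify that with the transversality of the asymptotic direction to $\Delta^{-1}(0)$. I would start by normalizing. Place $p$ at the origin and write $S$ in Monge form $\Xi(x,y)=(x,y,\varphi(x,y),\psi(x,y))$ with $j^1\Phi(0)=0$, so that $N_0S$ is the $(0,0,\ast,\ast)$-plane. After a rotation of this plane one may assume the binormal at $p$ is $b=(0,0,0,1)$, so the height function there is $f_b=\psi$; since $b$ is a binormal at a parabolic point, $f_b$ has a degenerate critical point at $0$, and by Proposition \ref{binas} the one-dimensional kernel of its Hessian is the asymptotic direction. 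A rotation of the $(x,y)$-plane then makes that direction $\partial_x$. Since $\He\psi(0)$ has one-dimensional kernel spanned by $(1,0)$, one gets $\psi_{xx}(0)=\psi_{xy}(0)=0$ and $\psi_{yy}(0)\ne 0$; since $\sff(\partial_x)=0$ at $p$, formula \eqref{abc} gives $a(0)=\varphi_{xx}(0)=0$; and since $p$ is parabolic but not an inflection point, $\kappa(p)\ne 0$, so \eqref{curvatures} forces $\varphi_{xy}(0)\ne 0$.

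Next I would apply the splitting lemma. Because $\psi_{yy}(0)\ne 0$, $f_b=\psi$ is $\mathcal R$-equivalent near $0$ to $\pm\tilde y^{2}+g(x)$, where $g(x)=\psi(x,y(x))$ and $y(x)$ is determined by $\psi_y(x,y(x))\equiv 0$, $y(0)=0$. Differentiating this relation gives $y'(0)=-\psi_{xy}(0)/\psi_{yy}(0)=0$, and then $g(0)=g'(0)=0$, $g''(0)=\psi_{xx}(0)=0$ and $g'''(0)=\psi_{xxx}(0)$. Hence $f_b$ has an $A_2$ singularity at $p$ (a fold) precisely when $\psi_{xxx}(0)\ne 0$, and an $A_{\ge 3}$ singularity (a cusp, or one of higher order) precisely when $\psi_{xxx}(0)=0$.

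It remains to show that the position of the asymptotic direction relative to $\Delta^{-1}(0)$ is governed by the same quantity. As $p$ is not an inflection point, $\Delta^{-1}(0)$ is a regular curve near $p$ with tangent line $\ker d\Delta_p$, so $\partial_x$ is tangent to it iff $\Delta_x(0)=0$. Using that $\Delta$ is proportional to the resultant $(ag-ce)^2-4(af-be)(bg-cf)$ and differentiating, every term at $0$ drops out except one, since $a(0)=e(0)=f(0)=0$; one finds $\Delta_x(0)=\pm\,\varphi_{xy}(0)^2\,\psi_{yy}(0)\,e_x(0)$, and from \eqref{efg}, since $e(0)=0$ and $\psi_{xx}(0)=\varphi_{xx}(0)=0$, one gets $e_x(0)=\psi_{xxx}(0)$. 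Because $\varphi_{xy}(0)\psi_{yy}(0)\ne 0$, the asymptotic direction is transverse to $\Delta^{-1}(0)$ iff $\psi_{xxx}(0)\ne 0$ and tangent to it iff $\psi_{xxx}(0)=0$; combined with the previous paragraph, this is exactly the assertion.

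The step I expect to be the main obstacle is the normalization: extracting from the four hypotheses --- $b$ a binormal, $p$ parabolic, $p$ not an inflection point, and the implicit $\kappa(p)\ne 0$ --- the precise vanishing pattern $\psi_{xx}(0)=\psi_{xy}(0)=\varphi_{xx}(0)=0$, $\psi_{yy}(0)\ne 0$, $\varphi_{xy}(0)\ne 0$. Once this is set up both computations collapse almost entirely, the one delicate point being that $e_x(0)$ reduces to $\psi_{xxx}(0)$, which holds precisely because $e$ vanishes at the origin.
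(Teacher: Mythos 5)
Your proof is correct: the normalization ($\varphi_{xx}=\psi_{xx}=\psi_{xy}=0$, $\psi_{yy}\varphi_{xy}\ne 0$ from $\Delta(p)=0$, the choice of binormal, and $\kappa(p)\ne 0$), the splitting-lemma reduction to $g'''(0)=\psi_{xxx}(0)$, and the computation $\Delta_x(0)=4\varphi_{xy}(0)^2\psi_{yy}(0)\psi_{xxx}(0)$ all check out, and together they give exactly the stated dichotomy. The paper itself offers no proof --- it quotes this result from \cite{dcc} --- and your argument is essentially the one given there; the only point left tacit is that ``tangent to $\Delta^{-1}(0)$'' presupposes $\Delta^{-1}(0)$ is regular at $p$ (i.e.\ $\Delta_y(0)\ne 0$ when $\Delta_x(0)=0$), a genericity assumption built into the statement rather than a gap in your reasoning.
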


\begin{prop}\cite{dcc}
The inflection points of a surface correspond to umbilic singularities, or higher singularities, of the height function.
\end{prop}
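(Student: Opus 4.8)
The plan is to compute the Hessian of the height function $f_b$ at a critical point and recognise it as a linear combination of the matrices $\mathcal M_1$ and $\mathcal M_2$ of the second fundamental form; the vanishing of this Hessian for some nonzero normal $b$ is exactly an umbilic-or-higher (that is, corank-two) singularity of $f_b$, and it will turn out to coincide with the inflection-point condition $\rank\mathcal M(p)\le 1$.

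First I would invoke the earlier proposition on critical points of $f$: a point $p\in S$ is critical for $f_b$ if and only if $b\in N_pS$. Placing $p$ at the origin in the adapted parametrisation $\Xi:(x,y)\mapsto(x,y,\varphi(x,y),\psi(x,y))$ with $j^1\Phi(0)=0$, a normal vector is $b=(0,0,b_3,b_4)$, so $f_b=b_3\,\varphi+b_4\,\psi$ and its Hessian at the origin is $b_3\,\He(\varphi)(0)+b_4\,\He(\psi)(0)$. Since $j^1\Phi(0)=0$ forces $E=G=\hat E=\hat G=1$ and $F=\hat F=0$, hence $W=1$, at the origin, the formulae \eqref{abc}--\eqref{efg} reduce there to $a=\varphi_{xx}$, $b=\varphi_{xy}$, $c=\varphi_{yy}$, $e=\psi_{xx}$, $f=\psi_{xy}$, $g=\psi_{yy}$; so the Hessian of $f_b$ at $p$ is precisely $b_3\,\mathcal M_1(p)+b_4\,\mathcal M_2(p)$.

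Next I would read off the corank. A function on a surface has an umbilic ($D_4$) singularity, or a more degenerate one, exactly when it is a critical point of corank two, i.e.\ when its Hessian vanishes identically. Hence some $f_b$ with $b\in N_pS\setminus\{0\}$ has such a singularity at $p$ if and only if there is a nonzero $(b_3,b_4)$ with $b_3\,\mathcal M_1(p)+b_4\,\mathcal M_2(p)=0$, i.e.\ if and only if $\mathcal M_1(p)$ and $\mathcal M_2(p)$ are linearly dependent as symmetric matrices. Under the linear isomorphisms $(a,b,c)\mapsto\mathcal M_1$ and $(e,f,g)\mapsto\mathcal M_2$, this dependence is exactly $\rank\mathcal M(p)\le 1$ for the $2\times3$ matrix $\mathcal M$ with rows $(a,b,c)$ and $(e,f,g)$, which by the earlier characterisation of inflection points is equivalent to $p$ being an inflection point. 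This yields the correspondence in both directions; when $\rank\mathcal M(p)=1$ the annihilating direction $b$ is unique up to a scalar and is the binormal at $p$, whereas $\rank\mathcal M(p)=0$ is the totally degenerate case in which every normal direction gives a corank-two singularity.

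The one step needing care is that the Hessian of $f_b$ is a priori tied to the chosen parametrisation and frame, while the conclusion must be intrinsic: I would note that a change of adapted frame acts on the pair $(\mathcal M_1,\mathcal M_2)$ by simultaneous congruence and on $(b_3,b_4)$ by an invertible linear map, so the rank of the pencil $\{\,b_3\mathcal M_1+b_4\mathcal M_2\,\}$ --- and hence the existence of a nonzero annihilating $b$ --- is frame-independent; the normalisation $j^1\Phi(0)=0$ is merely a device to make the Hessian computation transparent. This bookkeeping, not any hard analysis, is the main (and rather modest) obstacle.
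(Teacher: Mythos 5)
Your proof is correct. The paper states this proposition as a citation from \cite{dcc} and gives no proof of its own, but your argument --- identifying the Hessian of $f_b$ at a normalised critical point with the pencil $b_3\,\mathcal M_1(p)+b_4\,\mathcal M_2(p)$, observing that an umbilic-or-worse singularity is exactly a corank-two critical point (vanishing Hessian), and translating the existence of a nonzero annihilating $(b_3,b_4)$ into linear dependence of $\mathcal M_1(p)$ and $\mathcal M_2(p)$, i.e.\ ${\rank}\mathcal M(p)\le 1$, which the paper has already recorded as the inflection-point condition --- is precisely the standard argument behind that reference, and your closing remark on frame-independence correctly disposes of the only subtlety.
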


The singularities of the family of height functions on a generic surface can be used \cite{dcc} to characterize the different points of that surface:
\begin{itemize}
\item An elliptic point $p$ is a nondegenerate critical point for any 
of the height functions associated to normal directions to $S$ at $p$. 
\item If $p$ is a hyperbolic point, there are exactly 2 normal directions 
 at $p$ such that $p$ is a degenerate critical point of their corresponding 
height functions. 
\item If $p$ is a parabolic point, there is a unique normal direction such that 
$f_b$ is degenerate at $p$.
\begin{itemize}
\item A parabolic point  $p$ is  a fold singularity of $f_b$ if and only if  the unique asymptotic direction is not tangent to the line of parabolic points $\Delta^{-1}(0)$. 
\item A parabolic point  $p$ is  a cusp singularity of $f_b$ if and only if  $p$ is a \emph{parabolic cusp} of 
$S$, where the asymptotic direction is tangent to the line of parabolic points. 
\item A parabolic point  $p$ is  an umbilic point for $f_b$ if and only if  $p$ is an inflection point of 
$S$. 
\end{itemize}
\end{itemize}

\begin{rem}
For a generic surface, the points $p$ which are a swallowtail singularity of $f_b$ do not belong to the line of parabolic points; at a swallowtail singularity one of the asymptotic directions is tangent to line of points where $f_b$ has a cusp singularity.
\end{rem}

\

\section{Lagrangean surfaces}

A \emph{symplectic manifold} is a pair $(M,\omega)$, where $M$ is a $2n-$dimensional differentiable manifold and $\omega$ is a symplectic form: a closed non degenerate $2$-form. Then:
\[
\Omega=\dfrac{1}{n!}\,\omega^n \hbox{ \  is a volume form, and } \de\mspace{1mu}\omega=0
\]
A {\sl symplectic map} is  a map $\varphi:(M,\omega)\longrightarrow (M',\omega')$, such that:
\[
\varphi^*\omega' =\omega
\]

A  \emph{Lagrangean submanifold} $L$ of $(M,\omega)$ is an immersed submanifold of $M$ such that:
\[
i^* \omega\equiv 0,\quad \hbox{where }i:L\longrightarrow L \hbox{ is the immersion map}
\]

\begin{defn}
A \emph{Lagrangean surface} $\mathcal L$ is an immersed Lagrangean submanifold of $(\Rq,\omega)$.\end{defn}

We consider $\Rq$ with the standard inner product and metric, and also with the standard symplectic form $\omega$:
\[
\omega= 
\de x\wedge \de u+\de y\wedge \de v
\]
in the coordinates $(x,y,u,v)$.

\begin{ex}
[Whitney sphere]
A Whitney sphere is a
Lagrangian immersion of the unit sphere $\mathbf S^2$, centered at the origin of $\mathbf R^3$, 
in $\mathbf R^4$ given by:
\[
\Phi(x,y,z ) = \dfrac{r}
{1 +z^2} (x , xz , y, yz) + C,\quad x^2+y^2+z^2=1
\] 
where $r$ is a positive number and $C$ is a vector of $\mathbf R^4$,  respectively the radius and the centre of the Whitney sphere.

The Whitney spheres  are embedded except at the poles of $\mathbf S^2$, where they 
have  double points. 
\end{ex}

We recall some results from \cite{eu}:

\begin{lnf}
Given $p\in \mathcal L$, there is a  change of coordinates, by a translation and a linear symplectic and orthogonal map, such that locally $\mathcal L$ becomes the graph around the origin of a map 
\[
\Phi=(\varphi,\psi): \Rd \longrightarrow\Rd
\]
satisfying:
\begin{itemize}
\item The first jet of $\Phi$  is zero at the origin.
\item $\dfrac{\partial\varphi}{\partial y}\equiv\dfrac{\partial\psi}{\partial x}$
\end{itemize}
\end{lnf}

\begin{rem}
If we preserve orientation, so that the linear map $A\in SO(4)$, there is another normal form; the symplectic form in $\Rq$ is then $\omega'=
\de x\wedge \de u-\de y\wedge \de v$ and the identity in the normal form is:
\[
\dfrac{\partial\varphi}{\partial y}\equiv-\dfrac{\partial\psi}{\partial x}
\]
\end{rem}

\begin{prop}\cite{eu}
A necessary  condition for $\mathcal L\subset \Rq$ to be a Lagrangean surface is that the Gaussian curvature and the normal curvature coincide up to sign:
\[
|K|\equiv |\kappa|
\]
\end{prop}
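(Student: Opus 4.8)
The plan is to reduce the identity to one pointwise computation, carried out at a point placed in the Local Normal Form, and then to note that both sides are geometric invariants and so do not depend on that normalisation.

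Fix $p\in\mathcal L$. By the Local Normal Form we may assume, after a translation and a linear map of $\Rq$ which is orthogonal and compatible with the symplectic structure, that near $p$ the surface $\mathcal L$ is the graph of $\Phi=(\varphi,\psi):\Rd\longrightarrow\Rd$ with $j^1\Phi(0)=0$ and $\varphi_y\equiv\psi_x$, with $p$ the origin. This change of coordinates is an isometry of $\Rq$, so it leaves $K$ and $|\kappa|$ unchanged (the signed normal curvature $\kappa$ can change at most by an overall sign, according to the induced orientation of the normal bundle), and it preserves the Lagrangean condition. Hence it suffices to prove $K(0)=\pm\kappa(0)$ for such a graph, using the formulae \eqref{K} and \eqref{N}.

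At the origin, $j^1\Phi(0)=0$ forces $T_1=(1,0,0,0)$, $T_2=(0,1,0,0)$, $N_1=(0,0,1,0)$, $N_2=(0,0,0,1)$, whence $E=G=\hat E=\hat G=W=1$ and $F=\hat F=0$. In \eqref{K} and \eqref{N} the terms carrying $\hat F$ and $F$ therefore vanish, leaving $K(0)=H_\varphi(0)+H_\psi(0)$ and $\kappa(0)=L(0)+N(0)$. Differentiating the Lagrangean identity $\varphi_y\equiv\psi_x$ gives $\varphi_{xy}\equiv\psi_{xx}$ and $\varphi_{yy}\equiv\psi_{xy}$; writing $a=\varphi_{xx}(0)$, $b=\varphi_{xy}(0)=\psi_{xx}(0)$, $c=\varphi_{yy}(0)=\psi_{xy}(0)$, $d=\psi_{yy}(0)$ and substituting into the relevant $2\times 2$ determinants one finds $H_\varphi=ac-b^2=N$ and $H_\psi=bd-c^2=L$, so that $K(0)=\kappa(0)$. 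For the orientation-preserving version of the Local Normal Form, where $\varphi_y\equiv-\psi_x$, the same substitution yields $K(0)=-\kappa(0)$. Since $p\in\mathcal L$ was arbitrary, $|K|\equiv|\kappa|$ on $\mathcal L$.

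I do not anticipate a real obstacle: once one is at the origin the computation is immediate. The two points that deserve care are that the normal-form change of coordinates genuinely preserves $K$ and $|\kappa|$ (which is why one insists that its linear part be orthogonal, not merely symplectic), and the short bookkeeping identifying the Hessian determinant of $\varphi$ with $N$ and that of $\psi$ with $L$ after the substitutions $\varphi_{xy}=\psi_{xx}$, $\varphi_{yy}=\psi_{xy}$. One could alternatively verify the identity from the full formulae \eqref{K} and \eqref{N} under only the constraint $\varphi_y=\psi_x$, without normalising the first jet, but that is a substantially heavier computation with no extra payoff.
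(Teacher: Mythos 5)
Your proof is correct and runs along the same lines as the argument the paper relies on (the proposition is only cited from \cite{eu}, but the mechanism is visible in the text: the Local Normal Form forces $e\equiv b$, $f\equiv c$ in the adapted frame, whence $K=(ac-b^2)+(eg-f^2)$ equals $\kappa=(a-c)f-(e-g)b$ directly from \eqref{curvatures}). Your pointwise normalisation $E=G=\hat E=\hat G=W=1$, $F=\hat F=0$ and the identifications $H_\varphi=N$, $H_\psi=L$ under $\varphi_{xy}=\psi_{xx}$, $\varphi_{yy}=\psi_{xy}$ are exactly that computation in graph coordinates, and the invariance discussion disposing of the choice of normal form is handled properly.
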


\begin{rem}
If $\mathcal L\subset \Rq$ is a Lagrangean surface, in the moving frame associated to the local normal form we  have:
\beq
e\equiv b,\quad f\equiv c\label{e=b}
\eeq
\end{rem}

\begin{cor}\cite{eu}
Let $\mathcal L$ be a Lagrangean surface and $p\in \mathcal L$. Then the following conditions are equivalent:
\begin{enumerate}
\item $p$ is an inflection point.
\item $p$ is a parabolic point where the Gaussian curvature vanishes.
\item $p$ is a parabolic point where the normal curvature vanishes.
\item $rank\begin{pmatrix}
a&b&c\\
e&f&g
\end{pmatrix} \le 1$.
\item $\rank\begin{pmatrix}
a&b&e&f\\
b&c&f&g
\end{pmatrix} \le 1$.
\end{enumerate}
\end{cor}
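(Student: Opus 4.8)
My plan is to work, at the given point $p$, in the orthonormal moving frame attached to the Local Normal Form, where $e\equiv b$ and $f\equiv c$. This is legitimate because all five conditions are geometric: (1)--(3) manifestly, and the rank conditions in (4) and (5) because the rank of the coefficient matrix of the pair $\sff_1,\sff_2$ and the rank of $(\mathcal M_1\,|\,\mathcal M_2)$ are unchanged under rotations of the tangent and of the normal frame. Once the frame is fixed, everything reduces to a few identities in $a,b,c,g$.

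The first point is that in this frame $K$ and $\kappa$ coincide outright, not merely up to sign: substituting $e=b$, $f=c$ into $K=(ac-b^2)+(eg-f^2)$ and $\kappa=(a-c)f-(e-g)b$ gives $K=ac-b^2+bg-c^2=(a-c)c-(b-g)b=\kappa$. Hence $K(p)=0\Leftrightarrow\kappa(p)=0$, and since a parabolic point is by definition one with $\Delta(p)=0$, this is precisely $(2)\Leftrightarrow(3)$.

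The equivalence $(1)\Leftrightarrow(3)$ is just the proposition of \cite{little} recalled above, restated: $p$ is an inflection point exactly when $p\in\Delta^{-1}(0)\cap\kappa^{-1}(0)$, that is, when $p$ is a parabolic point at which the normal curvature vanishes. For $(3)\Leftrightarrow(4)$ I would expand the determinant defining $\Delta$, which with $e=b$, $f=c$ becomes
\[
\Delta=(ac-b^2)(bg-c^2)-\frac14(ag-bc)^2 .
\]
If ${\rank}\begin{pmatrix}a&b&c\\ e&f&g\end{pmatrix}\le1$ then the three $2\times2$ minors $ac-b^2$, $ag-bc$, $bg-c^2$ all vanish, so $\Delta=0$ and $\kappa=K=(ac-b^2)+(bg-c^2)=0$: thus $(4)\Rightarrow(3)$. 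Conversely, if $\kappa(p)=0$ then $bg-c^2=-(ac-b^2)$, so $\Delta=-(ac-b^2)^2-\frac14(ag-bc)^2\le0$; imposing $\Delta(p)=0$ forces both $ac-b^2=0$ and $ag-bc=0$, hence also $bg-c^2=0$, so all the $2\times2$ minors vanish and $(3)\Rightarrow(4)$. (The same line shows, incidentally, that $\Delta(p)\le0$ whenever $\kappa(p)=0$, so on a Lagrangean surface every inflection point is of flat type.)

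Finally $(4)\Leftrightarrow(5)$ is immediate in this frame: with $e=b$, $f=c$ the matrix in (5) is $\begin{pmatrix}a&b&b&c\\ b&c&c&g\end{pmatrix}$, whose rank is unchanged by deleting the repeated column $\begin{pmatrix}b\\ c\end{pmatrix}$, leaving $\begin{pmatrix}a&b&c\\ b&c&g\end{pmatrix}$ --- the matrix in (4). Chaining the implications gives $(1)\Leftrightarrow(2)\Leftrightarrow(3)\Leftrightarrow(4)\Leftrightarrow(5)$. The only step with genuine content is the computation of $\Delta$ together with the observation that the hypothesis $\kappa=0$ turns $-\Delta$ into a sum of two squares, which is what forces the rank to drop; everything else is bookkeeping with the normal-form identities $e\equiv b$, $f\equiv c$ and the previously quoted results.
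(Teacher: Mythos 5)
Your proof is correct, and the paper itself gives no argument for this corollary (it is quoted from reference [eu]); your derivation assembles exactly the ingredients the paper sets up around the statement, namely the frame identities $e\equiv b$, $f\equiv c$, Little's characterization of inflection points as $\Delta=0$, $\kappa=0$, and the resultant formula for $\Delta$. The one step where the Lagrangean hypothesis is genuinely indispensable is $(4)\Leftrightarrow(5)$ via the repeated column, and you identify it as such, so nothing is missing.
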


\section{Asymptotic lines for Lagrangean surfaces}

There are exactly two asymptotic directions 
at a hyperbolic point and just one asymptotic direction at a parabolic point, unless 
it is an inflection point, in which case all the directions are asymptotic.

The  condition for  a tangent vector $T=\cos \theta \; T_1 +\sin \theta \;T_2$ to span an asymptotic direction is:
\[
(af-be) \cos ^2\theta+(ag-ce) \cos \theta  \sin \theta +(bg-cf) \sin ^2 \theta =0
\]
and with the natural identifications,  the asymptotic directions are solutions to the binary implicit differential equation:
\begin{equation}\label{lasym}
(af-be) \de x^2+(ag-ce)\de x\de y+(bg-cf)\de y^2=0
\end{equation}

We have seen before  (\ref{e=b}) that on a Lagrangean surface we have:
\[
e\equiv b,\quad f\equiv c\hbox{\ \ and thus\ \ }af-be=ac-b^2,\quad bg-cf=eg-f^2
\]
A straightforward computation gives:
\begin{align}\label{cal}
af-be=&\dfrac{1}{\sqrt{EW}}\phantom{(}H_{\varphi}\\\notag
ag-ce=&\dfrac{1}{\sqrt{EW}}\left(EM-2FH_{\varphi}\right)\\\notag
bg-cf=&\dfrac{1}{\sqrt{EW}}\left(E^2H_{\psi}- EFM+F^2H_{\varphi}\right)
\end{align}

The implicit differential equation  (\ref{lasym}) for  the asymptotic directions
becomes:
\beq
H_{\varphi} \de x^2+(EM-2FH_{\varphi})\de x\de y+(E^2H_{\psi}- EFM+F^2H_{\varphi})\de y^2=0
\eeq
with discriminant curve:
\[
\Delta =(ag-ce)^2-4(af-be)(bg-cf)=0
\]
given by:
\[
M^2-4H_{\varphi}H_{\psi}=0
\]

The inflection points for Lagrangean surfaces are quite special, in particular the Gaussian and normal curvatures vanish on them; the following results were proved in \cite{eu}:
 
\begin{prop}
If the Gaussian curvature vanishes at a point $p$  in the discriminant curve $\Delta=0$, then:
\begin{itemize}
\item  The discriminant curve has a non Morse singularity at $p$
\item The point $p$ is a flat inflection point
\end{itemize}
\end{prop}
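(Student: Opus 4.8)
The second assertion needs no new work. Since $\Delta(p)=0$ the point $p$ is parabolic, so with $K(p)=0$ the Corollary shows $p$ is an inflection point; and in the classification of inflection points the case $\Delta(p)=0$, $K(p)=0$ is exactly that of flat type. So everything is in the first assertion. Throughout I use the Local Normal Form: near $p$ the surface is the graph of $\Phi=(\varphi,\psi)$ with $j^1\Phi(p)=0$ and $\varphi_y\equiv\psi_x$, so in particular $\psi_{xx}\equiv\varphi_{xy}$ and $\psi_{xy}\equiv\varphi_{yy}$; and I recall that the discriminant curve is $\Delta=0$ with $\Delta$ equal, up to a positive smooth factor, to $\widetilde\Delta:=M^2-4H_\varphi H_\psi$ (indeed $(ag-ce)^2-4(af-be)(bg-cf)=\tfrac{E}{W}\widetilde\Delta$, and $E,W>0$). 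At a common zero of $\widetilde\Delta$ and $\de\widetilde\Delta$ the Hessians of $\Delta$ and $\widetilde\Delta$ therefore differ by a nonzero scalar, so it suffices to analyse $\widetilde\Delta$.

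The plan is to recognise $\widetilde\Delta$ as the discriminant of a binary cubic. Expanding $M^2-4H_\varphi H_\psi$ and using the Lagrangean identities $\psi_{xx}\equiv\varphi_{xy}$, $\psi_{xy}\equiv\varphi_{yy}$ one gets
\[
\widetilde\Delta=-\tfrac1{27}\,\mathrm{disc}\,C,\qquad C=C_z(\xi,\eta)=\varphi_{xx}\,\xi^3+3\varphi_{xy}\,\xi^2\eta+3\varphi_{yy}\,\xi\eta^2+\psi_{yy}\,\eta^3,
\]
a binary cubic whose coefficients are functions on $\mathcal L$. The inflection condition at $p$ — the rank $\le 1$ condition in the Corollary — says precisely that the Hankel matrix of coefficients of $C_p$ has all its $2\times2$ minors zero, i.e.\ that $C_p$ is a perfect cube, so $C_p$ has a triple root. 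In particular $\widetilde\Delta(p)=0$, and $\de\widetilde\Delta(p)=0$ because $M,H_\varphi,H_\psi$ all vanish at $p$, which re-exhibits $p$ as a singular point of $\Delta=0$. To read off the Hessian there, after a linear change of $(\xi,\eta)$ that brings the triple root of $C_p$ to a finite position (this only multiplies $\mathrm{disc}\,C$ by a positive constant) I divide $C$ by its leading coefficient and translate $t=\xi/\eta$, putting it in the form $\lambda(z)\bigl(t^3+P(z)t+Q(z)\bigr)$ with $\lambda$ smooth, $\lambda(p)\ne0$; the triple root at $p$ forces $P(p)=Q(p)=0$, hence
\[
\widetilde\Delta=-\tfrac1{27}\lambda^4\bigl(-4P^3-27Q^2\bigr)=\lambda^4\bigl(\tfrac4{27}P^3+Q^2\bigr).
\]
Since $P(p)=Q(p)=0$ the $2$-jet of $\widetilde\Delta$ at $p$ is $\lambda(p)^4Q_1^2$, with $Q_1$ the linear part of $Q$: a nonzero constant times a perfect square. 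Thus $\He\widetilde\Delta(p)$, and with it $\He\Delta(p)$, has rank at most one, so $p$ is a non-Morse singular point of $\Delta=0$. (When $P,Q$ give a submersion onto the $(P,Q)$-plane, $\Delta=0$ is locally the cusp $Q^2+\tfrac4{27}P^3=0$ — the cuspidal discriminant mentioned in the Introduction.) The only exceptional case is $C_p\equiv0$, i.e.\ $j^2\Phi(p)=0$; then $M,H_\varphi,H_\psi$ vanish to second order at $p$, so $\widetilde\Delta=O(4)$ and $\He\Delta(p)=0$, again non-Morse.

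The crux — the one step that is not bookkeeping — is the identity $\widetilde\Delta=-\tfrac1{27}\,\mathrm{disc}\,C$: it is an elementary polynomial computation, but it uses the Lagrangean relations $\psi_{xx}\equiv\varphi_{xy}$, $\psi_{xy}\equiv\varphi_{yy}$ essentially (it fails for a general surface in $\Rq$), and it is what reduces the statement to the classical fact that the discriminant of a cubic developing a triple root has $2$-jet of the form $(\text{const})\cdot(\text{linear})^2$. One can also avoid the cubic altogether: the rank $\le1$ condition forces the quadratic parts of $\varphi$ and $\psi$ at $p$ to be proportional forms of rank $\le1$, hence both multiples of a single perfect square; an orthogonal symplectic rotation acting diagonally on $\Rd\times\Rd$ — which preserves all the normal-form conditions — aligns that square with $x^2$, whereupon $\varphi_y\equiv\psi_x$ forces $j^2\psi(p)=0$. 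In those coordinates $H_\psi$ vanishes to second order at $p$, so the $2$-jet of $\widetilde\Delta=M^2-4H_\varphi H_\psi$ collapses to $M_1^2$ ($M_1$ the linear part of $M$), once more a perfect square, and $\He\widetilde\Delta(p)$ has rank $\le1$.
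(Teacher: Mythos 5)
Your argument is correct. The paper itself gives no proof here --- it only cites \cite{eu} --- but the computation it leans on later, in the proposition showing the discriminant curve has a cusp, is precisely your closing ``avoid the cubic'' route: pass to the normal form $j^2\varphi=\tfrac12\eta x^2$, $j^2\psi=0$ of Proposition \ref{nfip}, note that $H_\psi$ then vanishes to second order and $H_\varphi$ to first order, so the $2$-jet of $M^2-4H_{\varphi}H_{\psi}$ collapses to the perfect square given by the linear part of $M$ squared, namely a multiple of $(\zeta_4x+\zeta_5y)^2$; the positive factor $E/W$ relating this to $\Delta$ is harmless at a common zero of the function and its differential, so the Hessian has rank at most one and the singularity is non-Morse. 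Your primary route through the binary cubic $C=\varphi_{xx}\xi^3+3\varphi_{xy}\xi^2\eta+3\varphi_{yy}\xi\eta^2+\psi_{yy}\eta^3$ is genuinely different and arguably more illuminating: I checked the identity $M^2-4H_{\varphi}H_{\psi}=-\tfrac1{27}\,\mathrm{disc}\,C$, which is exactly the classical expression of the discriminant of a binary cubic through the $2\times2$ minors of its Hankel matrix and does use the Lagrangean relations $\psi_{xx}\equiv\varphi_{xy}$, $\psi_{xy}\equiv\varphi_{yy}$ in an essential way; the inflection condition becomes ``$C_p$ has a triple root,'' and the discriminant of a cubic degenerating to a triple root has $2$-jet a nonzero multiple of a perfect square. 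This not only gives non-Morseness but also exhibits $\Delta=0$ as the pullback of the cuspidal discriminant $\{4P^3+27Q^2=0\}$, explaining in one stroke the cusp that the paper establishes separately afterwards. Two minor points of hygiene, neither a gap: ``translate $t=\xi/\eta$'' elides the $z$-dependent Tschirnhaus shift needed to reach the depressed form $t^3+Pt+Q$ (unimodular, and legitimate since the leading coefficient is nonzero near $p$ after your preliminary linear change of $(\xi,\eta)$); and you correctly isolate the degenerate case $C_p\equiv0$, where the Hessian vanishes outright.
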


\begin{theorem}
The existence of a flat inflection point in a generic Lagrangean surface is a stable situation: it persists for small perturbations inside the class of Lagrangean surfaces.
\end{theorem}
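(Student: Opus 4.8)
The plan is to reduce the inflection‑point condition, in a chart adapted to $p$, to the condition that the third‑derivative vector of a local generating function lies on the cone over a twisted cubic in $\Rq$, and then to combine Thom's transversality theorem with the stability of transverse intersections. First I would pass to the Local Normal Form around the flat inflection point $p$: a neighbourhood of $p$ in $\mathcal L$ is the graph of $\nabla h$ for some $h\colon\Rd\to\mathbf R$ with $\varphi=h_x$, $\psi=h_y$ and $\varphi_y\equiv\psi_x$ throughout the chart. A short computation shows that the identity $\varphi_y\equiv\psi_x$ already forces $E\equiv\hat E$ and $F\equiv\hat F$, so that the relations $e\equiv b$, $f\equiv c$ of (\ref{e=b}) hold at \emph{every} point of the chart, not only where the first jet of $\Phi$ vanishes. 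Combining the characterisation of inflection points in the Corollary (condition~(4)) with the formulae (\ref{cal}), a point of the chart is an inflection point exactly when the three coefficients of the asymptotic equation (\ref{lasym}) all vanish there; since $E>0$ and $W>0$, this is equivalent to $H_\varphi=\He(\varphi)=0$, $H_\psi=\He(\psi)=0$ and $M=0$ (the $M$ of (\ref{N})), i.e.\ to the single rank condition
\[
\rank\left[\begin{matrix} h_{xxx} & h_{xxy} & h_{xyy}\\ h_{xxy} & h_{xyy} & h_{yyy}\end{matrix}\right]\le 1 .
\]
Writing $\Theta\colon(x,y)\mapsto(h_{xxx},h_{xxy},h_{xyy},h_{yyy})\in\Rq$, the inflection points of $\mathcal L$ in this chart are thus the points of $\Theta^{-1}(V)$, where $V\subset\Rq$ is the affine cone over the twisted cubic $\{[\alpha^{3}:\alpha^{2}\beta:\alpha\beta^{2}:\beta^{3}]\}$.

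The point of this reformulation is that $\operatorname{codim}V=2$ in $\Rq$ --- precisely the dimension of the domain of $\Theta$ --- and that $V\setminus\{0\}$ is a smooth embedded surface (the vertex $0$ being the only singular point of $V$), while $\Theta(q)=0$ means $j^{3}h(q)=0$, a condition of codimension $4$. Hence for a generic Lagrangean surface $\Theta$ misses the vertex of $V$, and the inflection points in the chart are the points of $\Theta^{-1}(V\setminus\{0\})$. Since $\Theta=\pi\circ j^{3}h$ with $\pi\colon J^{3}(\Rd,\mathbf R)\to\Rq$ the linear submersion onto the purely cubic part of the jet, Thom's transversality theorem --- applied to the generating functions, and to the stratification of $V$ by its vertex and its complement --- yields a residual set of Lagrangean surfaces for which $\Theta$ is transverse to $V\setminus\{0\}$. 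For such a surface every inflection point $q_{0}$ is an isolated transverse intersection of $\Theta$ with $V$: choosing a local submersion $g$ with $g^{-1}(0)=V\setminus\{0\}$ near the value $\Theta(q_{0})$, the composition $g\circ\Theta\colon\Rd\to\Rd$ has invertible differential at $q_{0}$.

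The theorem then follows. A sufficiently small perturbation of $\mathcal L$ inside the class of Lagrangean surfaces is, near $p$, again the graph of a gradient $\nabla h'$ with $h'$ close to $h$, hence gives a map $\Theta'$ close to $\Theta$; the implicit function theorem applied to $g\circ\Theta'$ produces a zero near $q_{0}$, that is, an inflection point $p'$ of the perturbed surface close to $p$. By the Corollary together with the Proposition immediately preceding the statement of the theorem, every inflection point of a Lagrangean surface lies on $\Delta^{-1}(0)$ and has vanishing Gaussian curvature, so $p'$ is automatically an inflection point of flat type, which completes the argument. The one genuinely delicate step is the reduction carried out in the first paragraph --- checking that on a Lagrangean graph the inflection locus is cut out exactly by the rank (Hankel) condition on the third derivatives of $h$; once this is established, the numerical coincidence $\operatorname{codim}V=2=\dim\Rd$ is what makes flat inflection points a transverse, and therefore stable, phenomenon, and the remaining points --- the generic avoidance of the vertex of $V$ and the identification of Lagrangean perturbations with perturbations of the generating function --- are routine.
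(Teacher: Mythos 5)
Your proof is correct; note, however, that the paper offers no proof of this theorem to compare against --- it is quoted from \cite{eu} --- so what follows is an assessment of your argument on its own terms. The one step you rightly single out as delicate does check out: on a Lagrangean graph $(\varphi,\psi)=(h_x,h_y)$ the identity $\varphi_y\equiv\psi_x$ forces $\hat E\equiv E$ and $\hat F\equiv F$ throughout the chart, whence $e\equiv b$, $f\equiv c$ hold everywhere (not only where the first jet vanishes), and then (\ref{cal}) together with condition (4) of the Corollary shows that the inflection locus is exactly $\{H_\varphi=M=H_\psi=0\}$, i.e.\ the rank-one condition on the Hankel matrix of third derivatives of $h$. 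From there the dimension count (the cone $V$ over the twisted cubic has codimension $2=\dim\Rd$ and its vertex codimension $4$), jet transversality applied to the generating function, and the openness of transverse intersections in complementary dimension give persistence; and flatness of the persisting inflection point is automatic for Lagrangean surfaces by the Corollary and the Proposition preceding the theorem. A consistency check worth adding: in the normal form of Proposition \ref{nfip} one has $\Theta(0)=(\eta,0,0,0)$, the tangent plane $T_{\Theta(0)}V$ is the plane of the first two coordinates, and $D\Theta(0)$ has rows $2(\zeta_1,\zeta_2,\zeta_3,\zeta_4)$ and $2(\zeta_2,\zeta_3,\zeta_4,\zeta_5)$, so your transversality condition is precisely $\eta\ne 0$ and $\zeta_4^2-\zeta_3\zeta_5\ne 0$ --- the paper's genericity condition (\ref{transcond}) minus $\zeta_5\ne 0$, which only governs the tangency of the discriminant cusp with $[0:1]$ and is irrelevant to stability. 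The steps you defer as routine (generic avoidance of the vertex of $V$, and the fact that a $C^1$-small Lagrangean perturbation is again locally the graph of a closed, hence exact, $1$-form and so corresponds to a small perturbation of $h$) are indeed standard, though the last one is exactly the point that confines the perturbation to a two-parameter target and should be stated rather than assumed.
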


\subsection{Normal form around an inflection point}

Our interest in the inflection points is the study of the asymptotic lines around them, therefore we can make changes of coordinates that are  isometries and symplectomorphisms,  as they do not affect those objects.

\begin{prop}\label{nfip}
Let $p\in S$ be an inflection point in a generic Lagrangean surface. Then around $p$ the surface $S$ can be given as the graph around the origin of a map $(x,y)\mapsto (\varphi(x,y), \psi(x,y))$ such that:
\begin{align*}
j^3\varphi=&\dfrac{1}{2}\eta x^2+\dfrac{1}{3}\zeta_1x^3+\zeta_2x^2y+\zeta_3xy^2+\dfrac{1}{3}\zeta_4y^3\\
j^3\psi=&\phantom{\dfrac{1}{2}\eta x^2+}\dfrac{1}{3}\zeta_2x^3+\zeta_3x^2y+\zeta_4xy^2 + \dfrac{1}{3}\zeta_5y^3
\end{align*}
\end{prop}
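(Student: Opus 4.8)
The plan is to start from the Local Normal Form, repackage the $3$-jet of $\Phi$ into a scalar potential, use the inflection condition to see that the cubic part of that potential is the cube of a linear form, and then apply a single rotation to put everything in the stated shape. Concretely, I would first invoke the Local Normal Form to write $\mathcal L$ as the graph of $\Phi=(\varphi,\psi)$ with $j^1\Phi(0)=0$ and $\varphi_y\equiv\psi_x$. That identity makes $\varphi\,\de x+\psi\,\de y$ closed, hence exact near the origin, so $\varphi=\Theta_x$, $\psi=\Theta_y$ for a function $\Theta$ which (after subtracting a constant) satisfies $\Theta(0)=0$; since $j^1\Phi(0)=0$ this forces $j^2\Theta(0)=0$, so $\Theta=\Theta_3+\Theta_4+O(5)$ with $\Theta_k$ homogeneous of degree $k$. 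Then $j^2\varphi=(\Theta_3)_x$, $j^2\psi=(\Theta_3)_y$, and the cubic parts of $j^3\varphi, j^3\psi$ are $(\Theta_4)_x,(\Theta_4)_y$; thus the whole $3$-jet of $\Phi$ is carried by the single cubic $\Theta_3$ and the single quartic $\Theta_4$, as one should expect from the Lagrangean constraint.

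The heart of the matter is the inflection condition. At the origin $E=G=\hat E=\hat G=W=1$ and $F=\hat F=0$, so by \eqref{abc}--\eqref{efg} the second fundamental form coefficients at $p$ are just Taylor coefficients of $\Theta$: writing $\Theta_3=\tfrac16(Ax^3+3Bx^2y+3Cxy^2+Dy^3)$ one gets $(a,b,c,g)=(A,B,C,D)$, and \eqref{e=b} recovers $e=b$, $f=c$. By the Corollary, $p$ is an inflection point exactly when $\rank\begin{pmatrix}a&b&c\\ b&c&g\end{pmatrix}\le1$; I would read ``generic inflection point'' as this rank being \emph{exactly} one (rank zero means $\Theta_3\equiv0$, a phenomenon of higher codimension). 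Setting the three $2\times2$ minors to zero gives $ac=b^2$, $ag=bc$, $bg=c^2$; if $a\ne0$ then with $t=b/a$ one finds $(a,b,c,g)=a(1,t,t^2,t^3)$, i.e. $\Theta_3=\tfrac{a}{6}(x+ty)^3$, whereas if $a=0$ the minors force $b=c=0$ and $\Theta_3=\tfrac{g}{6}y^3$. In either case $\Theta_3$ is a nonzero multiple of the cube of a linear form.

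For the normalisation, since the asymptotic lines are unaffected by ambient isometries that are also symplectomorphisms, I would identify $\Rq=\Cd$ via $z_1=x+iu$, $z_2=y+iv$ so that $\omega$ becomes the standard K\"ahler form; then the real rotations acting diagonally, $(x,y,u,v)\mapsto(R_\theta(x,y),R_\theta(u,v))$, form an $SO(2)\subset U(2)$ and are simultaneously isometries and symplectomorphisms. They preserve the $(x,y)$-plane, hence the graph structure and the relation $\varphi_y\equiv\psi_x$, and they transform $\Phi$ by $\Phi\mapsto R_\theta\circ\Phi\circ R_\theta^{-1}$, hence the potential by $\Theta\mapsto\Theta\circ R_\theta^{-1}$, so $\Theta_3$ transforms as a binary cubic. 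Choosing $\theta$ so that the linear form found above becomes a multiple of the new coordinate $x$ turns $\Theta_3$ into $\tfrac{\eta}{6}x^3$ for some $\eta$ (nonzero in the generic case), so that $j^2\varphi=\tfrac12\eta x^2$ and $j^2\psi=0$. Naming the five coefficients of the leftover quartic by $\Theta_4=\tfrac1{12}\zeta_1x^4+\tfrac13\zeta_2x^3y+\tfrac12\zeta_3x^2y^2+\tfrac13\zeta_4xy^3+\tfrac1{12}\zeta_5y^4$ and differentiating then reproduces exactly the displayed formulas for $j^3\varphi$ and $j^3\psi$.

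The one genuinely substantive step is the second one: the algebraic fact that the rank-one inflection condition collapses the binary cubic $\Theta_3$ into a perfect cube. Everything else is the Local Normal Form together with elementary bookkeeping; the point to watch is that ``generic'' must include rank exactly one (equivalently $\Theta_3\not\equiv0$, equivalently $\eta\ne0$), since otherwise $\Theta_3$ need not be a genuine cube and the final rotation has nothing to straighten.
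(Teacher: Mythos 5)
Your proof is correct and lands on the same normal form, but it gets there by a genuinely different route, and the comparison is worth recording. The paper does not introduce the potential $\Theta$: it works with $\varphi,\psi$ directly, uses only the weaker consequences $H_\varphi(0,0)=H_\psi(0,0)=0$ and $\kappa(0,0)=0$ of the inflection condition to write $j^2\varphi=\tfrac12A(a_1x+a_2y)^2$, $j^2\psi=\tfrac12B(b_1x+b_2y)^2$ and deduce $AB(a\cdot b)(a\wedge b)=0$, and then must dispose of the alternative $a\perp b$ by combining the Lagrangean identities $\varphi_{xy}=\psi_{xx}$, $\varphi_{yy}=\psi_{xy}$ with an extra, coordinate-dependent genericity hypothesis $\varphi_{xy}(0,0)\ne0$. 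Your key lemma is different: the full inflection condition says the Hankel matrix $\begin{pmatrix}a&b&c\\ b&c&g\end{pmatrix}$ of the binary cubic $\Theta_3$ has rank $\le1$, which forces $\Theta_3$ to be a multiple of a perfect cube; this packages the degeneracy of both quadratic forms \emph{and} their proportionality into one algebraic statement, and it kills the perpendicular case for free (in that case $\rank\mathcal M=2$, so the point is not an inflection point at all — the paper's extra hypothesis is in fact dispensable). What the paper's version buys is that it stays within the moving-frame quantities already set up and makes explicit which curvature conditions ($K=0$, $\kappa=0$) are being consumed; what yours buys is a cleaner case-free derivation and the automatic appearance of the $\zeta$-pattern in the cubic terms as the two partial derivatives of the single quartic $\Theta_4$, a step the paper leaves implicit. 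The final rotation is the same in both arguments (the paper writes it as the orthogonal symplectic map $\hat x=a_1x+a_2y$, $\hat y=a_2x-a_1y$, $\hat u=a_1u+a_2v$, $\hat v=a_2u-a_1v$), and your reading of ``generic'' as $\rank$ exactly one, i.e.\ $\eta\ne0$, agrees with the genericity condition the paper imposes afterwards.
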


\begin{proof}
As $H_{\varphi}(0,0) = H_{\psi}(0,0)=0$ we see that $S$ is the graph of $(x,y)\mapsto (\varphi(x,y), \psi(x,y))$, with:
\[
j^2\varphi=\dfrac{1}{2}A(a_1x+a_2y)^2, \qquad j^2\psi=\dfrac{1}{2}B(b_1x+b_2y)^2
\]
and $\|a\|=\|b\|=1$, where $a=(a_1,a_2)$, $b=(b_1,b_2)$.

From $K(0,0)=H_{\varphi}(0,0) + H_{\psi}(0,0)=0$ it follows that  $0=\kappa (0,0)=L(0,0)+N(0,0)$ and we also have:
\[
\left|
\begin{matrix}
\varphi_{xy}&\psi_{xy}\\
\varphi_{yy}&\psi_{yy}
\end{matrix}
\right|
+
\left|
\begin{matrix}
\varphi_{xx}&\psi_{xx}\\
\varphi_{xy}&\psi_{xy}
\end{matrix}
\right|=0
\]
at the origin. This is readily evaluated as:
\[
AB(a\cdot b)(a\wedge b)=0
\]

In the generic case $AB\neq 0$ and thus either $a=\pm b$ or $a\perp b$; if we assume $a\cdot b$, or $b_1=a_2$, $b_2=-a_1$, the conditions:
\[
\varphi_{xy}(0,0)=\psi_{xx}(0,0), \quad  \varphi_{yy}(0,0)=\psi_{xy}(0,0) 
\]
are equivalent to:
\[
Aa_1a_2=Ba_2^2, \quad Aa_2^2=-Ba_1a_2
\]
These are impossible if  we assume the generic condition $\varphi_{xy}(0,0)\neq 0$; under this condition we have $a\wedge b$, or $a=\pm b$ and also:
\[
B=A\dfrac{a_2}{a_1}
\]
so that:
\[
j^2\varphi=\dfrac{1}{2}A(a_1x+a_2y)^2, \qquad j^2\psi=\dfrac{1}{2}A\dfrac{a_2}{a_1}(a_1x+a_2y)^2
\]

The change of coordinates:
\begin{align*}
\hat x=a_1x+a_2y \quad  \hat y=&a_2x-a_1y\\
\hat  u=a_1u+a_2v \quad \hat \hat v=&a_2u-a_1v
\end{align*}
is a symplectomorphism and an isometry, and in these coordinates (but writing with the old variables) we get:
\[
j^2\varphi=\dfrac{1}{2}\eta x^2, \quad j^2\psi=0, \qquad \eta=A\left(a_1+\dfrac{a_2^2}{a_1}\right)
\]
\end{proof}

It is easy to see that this type of coordinate change takes solutions of the binary differential equation for the asymptotic directions in the old coordinates to solutions of the corresponding equation for the new coordinates.

\section{Binary differential equation for the asymptotic lines}

As we have seen before, the implicit differential equation for  the asymptotic directions is the binary differential equation:
\beq\label{bde}
H_{\varphi} \de x^2+(EM-2FH_{\varphi})\de x\de y+(E^2H_{\psi}- EFM+F^2H_{\varphi})\de y^2=0
\eeq
and its discriminant curve is given by:
\[
M^2-4H_{\varphi}H_{\psi}=0
\]

Our aim is to describe the structure of the asymptotic lines around a flat inflection point, assumed to be the origin $(x,y)=0$. This will depend only on the 2-jets of the coefficients of the binary differential equation. We refer to the binary differential equation whose coefficients are the $k$-jets of the coefficients of the original  equation as the $k$-jet of that equation.

\begin{prop}
The 2-jet of the binary differential equation (\ref{bde}) is the same as the 2-jet of the binary differential equation:
\beq\label{bder}
H_{\varphi} \de x^2+M\de x\de y+H_{\psi}\de y^2=0
\eeq
which has the same discriminant curve.
\end{prop}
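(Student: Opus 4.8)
The plan is to compare the two binary differential equations coefficient by coefficient and show the differences are $O(|(x,y)|^3)$. The key observation is that at the origin we are at a flat inflection point, so $H_\varphi(0,0)=H_\psi(0,0)=M(0,0)=0$ (since $K(0)=0$ forces $H_\varphi(0)+H_\psi(0)=0$, and $\kappa(0)=0$ together with $\Delta(0)=0$ forces these to vanish individually by the Corollary in Section~5 and Proposition~\ref{nfip}); moreover the functions $E,F,G$ satisfy $E(0,0)=G(0,0)=1$, $F(0,0)=0$ and $F$ has vanishing first jet at the origin, because $j^1\Phi(0)=0$ implies $E=1+\varphi_x^2+\psi_x^2$, $F=\varphi_x\varphi_y+\psi_x\psi_y$, $G=1+\varphi_y^2+\psi_y^2$, and $\varphi_x,\varphi_y,\psi_x,\psi_y$ all vanish to first order. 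So $j^1(E-1)=j^1(G-1)=0$ and $j^2F=0$ (in fact $F$ vanishes to order $2$).

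First I would write out the three coefficients of (\ref{bde}) minus the corresponding coefficients of (\ref{bder}):
\[
H_\varphi - H_\varphi = 0, \qquad (EM-2FH_\varphi) - M = (E-1)M - 2FH_\varphi,
\]
\[
(E^2H_\psi - EFM + F^2H_\varphi) - H_\psi = (E^2-1)H_\psi - EFM + F^2H_\varphi.
\]
The first coefficient agrees identically, so there is nothing to prove there. For the middle coefficient, $(E-1)M$ is a product of a function vanishing to order $1$ (namely $E-1$) and a function vanishing to order $1$ (namely $M$, since $M(0,0)=0$), hence vanishes to order $2$; wait — this would only give a $2$-jet contribution. So I need to be more careful: I would use that $E-1$ vanishes to order exactly $2$ (its $1$-jet is zero), so $(E-1)M$ vanishes to order $\ge 3$; similarly $F$ vanishes to order $2$, so $FH_\varphi$ vanishes to order $\ge 3$. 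Hence the middle coefficient difference has zero $2$-jet. For the last coefficient: $E^2-1 = (E-1)(E+1)$ vanishes to order $2$ and $H_\psi$ vanishes to order $1$, so $(E^2-1)H_\psi$ vanishes to order $\ge 3$; $EFM$ has the factor $F$ vanishing to order $2$ and $M$ vanishing to order $1$, so it vanishes to order $\ge 3$; and $F^2H_\varphi$ has $F^2$ vanishing to order $4$. Thus the last coefficient difference also has zero $2$-jet.

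Combining, every coefficient of (\ref{bde}) and of (\ref{bder}) agrees up to terms of order $\ge 3$, i.e.\ they have the same $2$-jet; and the discriminant of (\ref{bder}) is by direct computation $M^2 - 4H_\varphi H_\psi$, which is exactly the discriminant curve displayed for (\ref{bde}), so the two equations share the same discriminant curve. The main obstacle — really the only subtlety — is keeping track of the exact vanishing orders: one must verify that $F$ vanishes to order $2$ (not merely order $1$) and that $E-1$, $G-1$ vanish to order $2$, which follows from $j^1\Phi(0)=0$ as indicated above; without the extra order this argument would only control the $1$-jet. Everything else is a routine degree count.
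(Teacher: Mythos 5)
Your proof is correct and supplies the routine order count that the paper omits (the proposition is stated there without proof): the two facts you isolate --- that $j^1\Phi(0)=0$ forces $E-1$, $F$, $G-1$ to vanish to second order, and that $H_\varphi$, $M$, $H_\psi$ all vanish at the flat inflection point --- are exactly what is needed to make each coefficient difference $O(3)$. For the discriminant claim you could also note directly that the discriminant of (\ref{bde}) is $E^2\bigl(M^2-4H_\varphi H_\psi\bigr)$ with $E>0$, so the two zero sets coincide without appealing to the formula displayed in the paper.
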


We assume (prop.\ref{nfip}) that:
\[j^2\varphi=\dfrac{1}{2}cx^2, \qquad j^2\psi=0
\]

We denote by $F(x,y,[\de x:\de y])=0$ the above equation (\ref{bder}): the bivalued direction field it determines can be lifted to a univalued vector field $X$ on the equation surface:
\beq\label{bdes}
\mathcal E=\{(x,y,[\de x:\de y]) | F(x,y,[\de x:\de y])=0\}\subset \mathbf R^2\times  \mathbf {RP}^1
\eeq
It is easier to consider an affine chart on $\mathbf {RP}^1$ and then:
\beq\label{bdesa}
\mathcal E=\{(x,y,p) | F(x,y,p)=H_{\varphi} +M p+H_{\psi}p^2=0\}, \quad p=\dfrac{\de y}{\de x}
\eeq
The lifted vector field is:
\beq\label{lvf}
X=\dfrac{\partial F}{\partial p}\dfrac{\partial}{\partial x}+p\dfrac{\partial F}{\partial p}\dfrac{\partial}{\partial y}-\left(
\dfrac{\partial F}{\partial x}+p\dfrac{\partial F}{\partial y}\right )\dfrac{\partial}{\partial p}
\eeq

The criminant curve is:
\[
\mathcal C=\left\{(x,y,p) | F(x,y,p)=0,\ \dfrac{\partial F}{\partial p}(x,y,p)=0 \right\}
\]
and its projection on the plane $(x,y)$ is the discriminant curve. The critical points of the field of asymptotic lines we want to study are projections of the critical points of the lifted vector field $X$ on the criminant curve.

Similarly, we can consider another affine chart on $\mathbf {RP}^1$:
\beq\label{bdesaq}
\mathcal E=\{(x,y,p) | \hat F(x,y,q)=H_{\varphi} q^2+M q+H_{\psi}=0\}, \quad q=\dfrac{\de x}{\de y}
\eeq
The lifted vector field is:
\beq\label{lvf}
\hat X=q \dfrac{\partial \hat F}{\partial q}\dfrac{\partial}{\partial x}+\dfrac{\partial \hat F}{\partial q}\dfrac{\partial}{\partial y}-q \left(
\dfrac{\partial \hat F}{\partial x}+\dfrac{\partial \hat F}{\partial y}\right )\dfrac{\partial}{\partial q}
\eeq
The vector field $X$ and $\hat X$ span the same direction at every point, and have the same critical points of the same type (disregarding orientation).
 
The criminant curve is:
\[
\mathcal C=\left\{(x,y,q) | \hat F(x,y,q)=0,\ \dfrac{\partial \hat F}{\partial q}(x,y,q)=0 \right\}
\]
and its projection on the plane $(x,y)$ is the discriminant curve.

 Abusing notation, we will not distinguish $F$ and $\hat F$, or $X$ and $\hat X$, the context should make it clear what coordinates are used.

The $p$-axis, or the $q$-axis, belongs to the criminant curve, since at a flat inflection point we have:
\[
H_{\varphi}=0, \quad M=0, \quad H_{\psi}=0, \quad \hbox{thus\ \ }\dfrac{\partial F}{\partial p}(0,0,p)\equiv 0,\ \dfrac{\partial F}{\partial q}(0,0,q)\equiv 0
\]

It also follows from this  that the $p$-axis is invariant for the lifted vector field $X$;
on the $p$-axis,  $X$ reduces to the differential equation:
\beq\label{lvfp}
\dot p=-\Pi(p), \quad \Pi(p)=\left(\dfrac{\partial F}{\partial x}+p\dfrac{\partial F}{\partial y}\right)\biggr |_{(x,y)=0}
\eeq
It has, in general, one or three critical points, as $\Pi(p)$ is a cubic polynomial in $p$:
\[
\Pi(p)=Ap^3+Bp^2+Cp+D
\]
where:
\[
A=H_{\psi,y},\quad B=H_{\psi,x}+M_y, \quad C=M_x+H_{\varphi,y}, \quad D=H_{\varphi,x}
\]
are computed at the origin.

From the normal form it follows that:
\[
\Pi(p)=\eta (\zeta_5 p^2+2\zeta_4 p+\zeta_3)
\]
and thus $p=\infty$ is always a critical point. It will be easier to use (\ref{bdesaq}), then:
\[
\dot q=-\hat \Pi(q),\qquad \hat \Pi(q)=\eta (\zeta_5 +2\zeta_4 q+\zeta_3 q^2) q
\]
and now the critical point will be at $q=0$. As before, we will not distinguish $\Pi$ and $\hat \Pi$.

The \emph{genericity condition} assumed throughout is that the roots of $\Pi(p)$ be simple. This is equivalent to:
\beq\label{transcond}
\eta\ne 0, \quad \zeta_5\ne 0, \quad \zeta_4^2-\zeta_3\zeta_5\ne 0
\eeq

\begin{prop}
In the generic case, the number and nature of the critical points of $X$ depends only on the 1-jet of the binary differential equation (\ref{bder}).
\end{prop}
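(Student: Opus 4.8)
The plan is to reduce the whole question to the cubic $\Pi$ of (\ref{lvfp}). Since $X$ is tangent to the equation surface $\mathcal E=\{F=0\}$, every critical point of $X$ satisfies $F=0$ and $\partial F/\partial p=0$, that is, lies on the criminant curve $\mathcal C$; and only the critical points lying over the inflection point itself, the origin, matter for the phase portrait there. So I would first recall that the fibre $\{(0,0)\}\times\mathbf{RP}^1$ is contained in $\mathcal C$ --- because $H_\varphi,M,H_\psi$ all vanish at the origin --- is invariant under $X$, and carries there exactly the one-dimensional dynamics $\dot p=-\Pi(p)$ (respectively $\dot q=-\hat\Pi(q)$ in the other chart). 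Thus the critical points of $X$ over the origin are the zeros of $\Pi$ in $\mathbf{RP}^1$; it remains to see that these exhaust the nearby critical points, and to read off their number and type.

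From the normal form of prop.~\ref{nfip} I would compute the $1$-jets at the origin of the coefficients of (\ref{bder}): up to a common nonzero constant they are $\eta(\zeta_3x+\zeta_4y)$ for $H_\varphi$, $\eta(\zeta_4x+\zeta_5y)$ for $M$, and $0$ for $H_\psi$ (this last vanishes to second order because $j^2\psi=0$). Hence the coefficients of $\Pi(p)=\eta(\zeta_5p^2+2\zeta_4p+\zeta_3)$ are, up to that constant, precisely the coefficients of the $1$-jet of (\ref{bder}); the vanishing of $j^1H_\psi$ removes the $p^3$-term, so $p=\infty$ is always a zero, simple by (\ref{transcond}), while the finite zeros solve $\zeta_5p^2+2\zeta_4p+\zeta_3=0$ and are two or none according to the sign of $\zeta_4^2-\zeta_3\zeta_5$. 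So the number of critical points of $X$ over the origin, $1$ or $3$, is already read off from the $1$-jet of (\ref{bder}).

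For the nature, and at the same time to exclude stray critical points, I would inspect $\mathcal C$ along the fibre. At a finite point $(0,0,p_0)$ the differentials of $F$ and of $F_p$ are, to first order, $(H_{\varphi,x}+p_0M_x)\de x+(H_{\varphi,y}+p_0M_y)\de y$ and $M_x\de x+M_y\de y$ (all taken at the origin, using $H_{\psi,x}(0)=H_{\psi,y}(0)=0$), and their $2\times2$ determinant is $H_{\varphi,x}M_y-H_{\varphi,y}M_x$, which is proportional to $\eta^2(\zeta_4^2-\zeta_3\zeta_5)\neq0$ by (\ref{transcond}) and independent of $p_0$. So $\mathcal C$ is a smooth curve near each finite point of the fibre, and, being tangent to and containing the $p$-axis, it coincides with it there: no extra critical points. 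Linearising $X$ at a finite zero $p_0$ of $\Pi$ yields a matrix whose first two rows are $(M_x,M_y,0)$ and $p_0(M_x,M_y,0)$ and whose third row is $(\ast,\ast,-\Pi'(p_0))$, the stars involving second derivatives of $H_\varphi,M,H_\psi$ (hence higher jets); using the Lagrangean identity $\varphi_y\equiv\psi_x$, which gives $H_{\varphi,y}(0)=M_x(0)$, one gets $\Pi'(p_0)=2\bigl(M_x(0)+p_0M_y(0)\bigr)$, so the characteristic polynomial factors as $-\lambda\bigl(\lambda+\Pi'(p_0)\bigr)\bigl(\lambda-\tfrac12\Pi'(p_0)\bigr)$, and the eigenvector for the eigenvalue $0$ turns out to be transverse to $\mathcal E$ (again by $\zeta_4^2-\zeta_3\zeta_5\neq0$). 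Hence $X|_{\mathcal E}$ has at $p_0$ the eigenvalues $-\Pi'(p_0)$ and $\tfrac12\Pi'(p_0)$, nonzero (simple zero of $\Pi$) and of opposite sign: a hyperbolic saddle, independently of the higher jets. The point $p=\infty$ must be handled in the $q$-chart, where $\mathcal E$ is itself singular; there the linearisation of $\hat X$ has eigenvalues $0$ and $\pm\hat\Pi'(0)=\pm2\eta\zeta_5$, still governed by the $1$-jet, and a short separate analysis (using the invariant $q$-axis together with the local cone structure of $\mathcal E$, or a blow-up) fixes its type. Assembling these, the number and nature of the critical points of $X$ depend only on $\eta,\zeta_3,\zeta_4,\zeta_5$, that is, on the $1$-jet of (\ref{bder}).

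The hard part is the verification that $\mathcal C$ contributes no critical points beyond the zeros of $\Pi$: over the finite part of the fibre this rests on the determinant proportional to $\eta^2(\zeta_4^2-\zeta_3\zeta_5)$ being nonzero, which is exactly (\ref{transcond}); the delicate case is $p=\infty$, over which $\mathcal E$ is singular, and one must argue separately --- still only with $\zeta_5\neq0$ --- that the criminant contributes there only the simple zero of $\hat\Pi$ at $q=0$. A secondary but essential point is to keep track of the spurious zero eigenvalue produced by the ambient linearisation and to check that it points out of $\mathcal E$, so that the hyperbolic-saddle type at each finite critical point is genuine rather than an artefact of the embedding in $\mathbf R^2\times\mathbf{RP}^1$.
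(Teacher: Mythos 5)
Your argument is correct and follows essentially the same route as the paper: lift to $X$ on $\mathbf R^2\times\mathbf{RP}^1$, identify the critical points over the origin with the zeros of $\Pi$ (one or three according to the sign of $\zeta_4^2-\zeta_3\zeta_5$), and read off the eigenvalues $0$, $-\Pi'(p_0)$, $\tfrac12\Pi'(p_0)$ from first derivatives of the coefficients, hence from the $1$-jet of (\ref{bder}). The extra verifications you flag (that the criminant coincides with the fibre near finite $p_0$, that the zero eigendirection is transverse to $\mathcal E$, and the separate analysis at $q=0$) are sound and fill in details the paper leaves implicit or defers to the subsequent theorem on the singularity at $P_S$.
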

\begin{proof}
We consider the lifted vector field $X$ on $\mathbf R^3$; we are interested on its critical points on the $p$-axis. They correspond to the zeros of $\Pi(p)$, and therefore there are one or three critical points.

The linear part of $X$ has always a zero eigenvalue $\mu_2$, since the two first lines are linearly dependent, and the eigenvalue corresponding to the $p$-axis is $\mu_1=-\Pi'(p)$; also $\mu_1+\mu_2+\mu_3$ is the trace of the linear part, $-F_{y}(0,0,p)$.
So the eigenvalues of $X$  are:
 \begin{align}
 \mu_1&=-F_{xp}-pF_{yp}-F_{y}\big |_{(0,0,p)}\\
 \notag
\mu_2&=0\\
\notag
\mu_3&=F_{xp}+pF_{yp}\big |_{(0,0,p)}
\end{align}

From $\Pi(p)=\eta (\zeta_5 p^2+2\zeta_4 p+\zeta_3)$ it follows that there exists a unique critical point when $\zeta_4-\zeta_3\zeta_5<0$, and three critical points when $\zeta_4-\zeta_3\zeta_5>0$.
\end{proof}

\begin{defn}
A singular point $P$ in a  topological surface $S$ is a saddle-type singularity for a  continuous vector field $Y$ on $S$ if:
\begin{itemize}
\item $P$ is an isolated singularity of $S$ and $Y$.
\item $S-P$ is a smooth surface, and $Y$ is smooth on $S-P$
\item There exist two smooth invariant curves on $S$ crossing transversally at $P$.
\item The invariant curves are the stable and unstable manifols of $Y$.
\end{itemize}
\end{defn}

\begin{theorem}
For a generic Lagrangean surface,  the  surface  $\mathcal E$, corresponding to the binary differential equation, is smooth  except at the  point $P_S=(0,0,[0:1])$. At that point:
\begin{itemize}
\item $\mathcal E$ has a Morse cone-like singularity.
\item The lifted vector field $X$ has a saddle-type singularity.
\end{itemize}
\end{theorem}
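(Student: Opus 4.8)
\emph{Strategy.} Everything is extracted from the $2$-jet of the defining function of $\mathcal E$ at $P_S$; after that $\mathcal E$ is desingularised by one blow-up and the lifted field is analysed on the exceptional curve. Put $P_S$ at the origin of the chart $q=\de x/\de y$, so that $\mathcal E=\{\hat F=0\}$ with $\hat F=H_\varphi q^2+Mq+H_\psi=\det\bigl(q\,\He\varphi+\He\psi\bigr)$. Using Proposition \ref{nfip} one finds that $H_\varphi$ and $M$ vanish to first order at the origin, with linear parts $2\eta(\zeta_3x+\zeta_4y)$ and $2\eta(\zeta_4x+\zeta_5y)$ (Jacobian $2\eta(\zeta_3\zeta_5-\zeta_4^2)$), while $H_\psi$ vanishes to second order, with quadratic part $4\bigl[(\zeta_2\zeta_4-\zeta_3^2)x^2+(\zeta_2\zeta_5-\zeta_3\zeta_4)xy+(\zeta_3\zeta_5-\zeta_4^2)y^2\bigr]$. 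Hence
\[
j^2\hat F(P_S)=Q(x,y,q)=2\eta(\zeta_4x+\zeta_5y)\,q+4\bigl[(\zeta_2\zeta_4-\zeta_3^2)x^2+(\zeta_2\zeta_5-\zeta_3\zeta_4)xy+(\zeta_3\zeta_5-\zeta_4^2)y^2\bigr].
\]
The Gram matrix of $Q$ has determinant $4\eta^2(\zeta_3\zeta_5-\zeta_4^2)^2$, positive under (\ref{transcond}); as $Q$ also vanishes identically on the $q$-axis it is indefinite, of signature $(1,2)$. By the Morse Lemma applied to $\hat F$ at $P_S$, near $P_S$ the surface $\mathcal E$ is diffeomorphic to the cone $\{Q=0\}$ — the asserted Morse cone-like singularity — and $P_S$ is an isolated critical point of $\hat F$. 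At the remaining points of the fibre over the origin (direction $[\de x:\de y]=[1:p]$, $p$ finite) the equation $\de F=0$ would force $\zeta_3+\zeta_4p=\zeta_4+\zeta_5p=0$, impossible since $\zeta_3\zeta_5\ne\zeta_4^2$; off that fibre $\mathcal E$ is the smooth surface of a generic binary differential equation. Thus $\mathcal E$ is smooth except at $P_S$.

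\emph{Resolution.} Blow up $(x,y,q)$-space at $P_S$. Because $Q$ is nondegenerate, the strict transform $\tilde{\mathcal E}$ of $\mathcal E$ is smooth along its intersection with the exceptional $\mathbf{RP}^2$, which is the smooth conic $\mathcal C_0=\{[x:y:q]\mid Q=0\}\cong\mathbf S^1$; the blow-down $\pi\colon\tilde{\mathcal E}\to\mathcal E$ is a diffeomorphism off $\mathcal C_0$ and crushes $\mathcal C_0$ to $P_S$. The lift $\tilde X$ of the lifted field is smooth across $\mathcal C_0$ and, since $X(P_S)=0$, tangent to $\mathcal C_0$, so $\mathcal C_0$ is invariant. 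The $q$-axis $\{x=y=0\}$ is a ruling of $\{Q=0\}$ (as $Q(0,0,q)\equiv0$), hence determines a point $e_q\in\mathcal C_0$; its strict transform is a smooth curve meeting $\mathcal C_0$ transversally at $e_q$, along which $\tilde X$ has the eigenvalue $-2\eta\zeta_5\ne0$ read off from $\dot q=-\hat\Pi(q)$ on the $q$-axis. A computation in a suitable affine chart of the blow-up shows that $T_{e_q}\tilde{\mathcal E}$ is spanned by the $\mathcal C_0$-direction and the $q$-axis direction and that the eigenvalue of $\tilde X|_{\tilde{\mathcal E}}$ \emph{along} $\mathcal C_0$ at $e_q$ equals $+2\eta\zeta_5$; hence $e_q$ is a hyperbolic saddle of $\tilde X|_{\tilde{\mathcal E}}$, with stable separatrix the strict transform of the $q$-axis and unstable separatrix an arc of $\mathcal C_0$.

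\emph{The second separatrix.} The unstable separatrix of $\tilde X$ at $e_q$ runs along $\mathcal C_0$ to the next zero $e_*$ of $\tilde X|_{\mathcal C_0}$; one verifies from the equations that, under (\ref{transcond}), $e_q$ and $e_*$ are the only zeros of $\tilde X|_{\mathcal C_0}$, both hyperbolic, and that the eigenvalue of $\tilde X|_{\tilde{\mathcal E}}$ transverse to $\mathcal C_0$ at $e_*$ is positive, so $e_*$ too is a hyperbolic saddle whose unstable separatrix is transverse to $\mathcal C_0$. Pushing forward by $\pi$: the strict transform of the $q$-axis maps to a smooth invariant curve of $X$ through $P_S$, and the unstable separatrix at $e_*$ maps to a smooth invariant curve of $X$ through $P_S$ tangent to the $e_*$-ruling of the cone; the two curves meet transversally at $P_S$ (distinct rulings) and, by the hyperbolicity in $\tilde{\mathcal E}$, are the local stable and unstable manifolds of $X$, which therefore has four hyperbolic sectors (two on each nappe of the cone) — a saddle-type singularity. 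That $P_S$ is an \emph{isolated} singularity of $X$ follows because on the $q$-axis $q=0$ is the only zero of $\hat\Pi$ near $P_S$, and on the other branch of the criminant $q\hat F_x+\hat F_y=2\eta\zeta_5q+O(q^2)\ne0$ near $P_S$.

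\emph{The hard part} is the third step: the global control of $\tilde X$ on the whole exceptional circle $\mathcal C_0$, that is, ruling out zeros of $\tilde X|_{\mathcal C_0}$ other than $e_q$ and $e_*$ and pinning down the sign of the transverse eigenvalue at $e_*$. This is precisely where the two sub-cases have to be separated — one versus three real zeros of $\hat\Pi$, i.e.\ the sign of $\zeta_4^2-\zeta_3\zeta_5$ — and it is what distinguishes a genuine saddle from a degenerate node. The remaining ingredients — the $2$-jet computation, the Morse Lemma, and the eigenvalue calculation at $e_q$ — are routine.
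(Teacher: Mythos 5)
Your treatment of the first bullet is correct and in fact sharper than the paper's: by writing out the full $2$-jet of $\hat F$ at $P_S$ you get the Gram determinant $4\eta^2(\zeta_3\zeta_5-\zeta_4^2)^2$ in closed form, so nondegeneracy of the Hessian is an automatic consequence of the genericity condition (\ref{transcond}), whereas the paper only argues that degeneracy is a non-identical algebraic condition on $(\eta,\zeta_1,\dots,\zeta_5)$ and exhibits a single nondegenerate example. The vanishing of the quadratic form along the $q$-axis then forces signature $(1,2)$, and the Morse lemma gives the cone; the check that the rest of the fibre over the origin is smooth is also right. This part stands on its own.

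The saddle-type claim is where there is a genuine gap, and you flag it yourself: in your blow-up scheme the conclusion rests on knowing that $\tilde X$ restricted to the exceptional conic $\mathcal C_0$ has exactly two zeros and that the eigenvalue transverse to $\mathcal C_0$ at the second zero $e_*$ has the right sign. Without that you cannot produce the second invariant curve through $P_S$, so the existence of two transversal separatrices --- the content of the second bullet --- is not established. That computation is not routine within your set-up, and, more to the point, it is unnecessary. The paper dispenses with the blow-up: the linear part of $X$ as a vector field on $\mathbf R^3$ at $P_S$ has eigenvalues $0$, $-\Pi'(0)$, $+\Pi'(0)$ (the $q$-axis is the eigendirection for $-\Pi'(0)$, the trace is $-F_x(0,0,0)=0$, and the plane $x=0$ is invariant and carries the linear saddle); the resulting one-dimensional stable and unstable manifolds are smooth curves meeting transversally at $P_S$ and lying in $\mathcal E=\{F=0\}$ because $F$ is a first integral of $X$. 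That already fulfils the paper's definition of a saddle-type singularity, with no global control of an exceptional circle required. Note also that the one-root versus three-root dichotomy for $\hat\Pi$, which you invoke as what ``distinguishes a genuine saddle from a degenerate node,'' is not what is at stake here: it governs the additional critical points of $X$ away from $P_S$ and hence the global phase portraits of the following theorem, while $P_S$ itself is a saddle in both cases.
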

\begin{proof}

We have:
\[
H_{\psi,x}=0, \quad
H_{\psi,y}=0, \quad
M_x=H_{\varphi,y}
\]
at $x=y=0$, and therefore we obtain,  using (\ref{bdesaq}):
\[
\dfrac{\partial F}{\partial x}(0,0,q)_{q=0}=0, \quad \dfrac{\partial F}{\partial y}(0,0,q)_{q=0}=0
\]

Since
\[
\dfrac{\partial F}{\partial q}(0,0,q)\equiv 0
\]
it follows that surface equation $\mathcal E$ has a  singularity at the  point $P_S=(0,0,[0:1])$; the same is true for the lifted vector field $X$, as:
\[
\dot q=- \Pi(q)=-q\dfrac{\partial F}{\partial x}(0,0,q)- \dfrac{\partial F}{\partial y}(0,0,q)=0
\]
which vanishes for $q=0$.

\begin{claim}
The  Hessian matrix of $F$  with respect to $(x,y)$ at $P_S=(0,0,[0:1])$ depends only on the 3-jet of $\varphi$ and $\psi$.
\end{claim}

 Let:
\[
F_2=\left[
\begin{matrix}
F_{xx} & F_{xy}\\
F_{xy} & F_{yy}
 \end{matrix}\right]_{(0,0)}=
\left[ \begin{matrix}
H_{\psi,xx} & H_{\psi,xy}\\
H_{\psi,xy} & H_{\psi,yy}
 \end{matrix}\right]_{(0,0)}
\]

Using again the normal form we get:
\begin{align*}
H_{\psi,xx}(0,0)=&2(\zeta_2 \zeta_4-\zeta_3^2)\\
H_{\psi,xy}(0,0)=&\zeta_2 \zeta_5-\zeta_3\zeta_4\\
H_{\psi,yy}(0,0)=&2(\zeta_3 \zeta_5-\zeta_4^2)
\end{align*}
and the claim is proved.

\begin{claim}
$P_S=(0,0,[0:1])$ is a Morse singularity of the surface $\mathcal E$.
\end{claim}

We need to prove that in general the determinant of the Hessian matrix of $F$ is non zero. Since $F_{qq}(0,0,0)=0$ we have:
\begin{align}
H_F(0,0,0)=&
\left|
\begin{matrix}
F_{xx} & F_{xy}& F_{xq}\\
F_{xy} & F_{yy}& F_{yq}\\
 F_{xq}& F_{yq}&0
 \end{matrix}\right|
 =2 F_{xy}F_{yq} F_{xq}-F_{xx}F_{yq}^2-F_{yy}F_{xq}^2\\
 \notag
 =&-Q_2(F_{yq},-F_{xq})=-Q_2(\mathfrak f)
\end{align}
where we consider $Q_2$ as the quadratic form with matrix $F_2$, and:
\[
\mathfrak f=\left[ \begin{matrix}
F_{yq}\\
- F_{xq}
 \end{matrix}\right]=
 \left[ \begin{matrix}
M_{y}\\
- M_{x}
 \end{matrix}\right]=\eta
  \left[ \begin{matrix}
\zeta_5\\
-\zeta_4
 \end{matrix}\right]
\]

It is easy to see that $F_{xq}$ and $F_{yq}$  at $P_S$ depend only on the 3-jet of $\varphi$ and $\psi$, and the same is true for $F_{xx}$, $F_{xy}$ and $F_{yy}$ according to the previous claim.

The 2-jet of $\varphi$ and $\psi$ at $(0,0)$ is completely determined by  $\eta=\varphi_{xx}(0,0)$, which can be chosen arbitrarily apart from the genericity condition $\eta\ne 0$. Note that the 1-jets are zero.

The 3-jet of $\varphi$ and $\psi$ at $(0,0)$ depends of an extra arbitrary choice of 5 variables, $\zeta_1$, \ldots ,$\zeta_5$.
Therefore $H_F(0,0,0)=0$ is an algebraic condition on the 6 variables $\eta$ and $\zeta$; moreover, that condition is not an identity, as if for instance we choose:
\[
\zeta_1=\zeta_2=\zeta_3=0,\quad \zeta_4=\zeta_5=1
\]
we have:
\[
F_2(0,0,0)=
\left[ \begin{matrix}
0 &  0\\
0 &  -2
 \end{matrix}\right],\quad 
 \mathfrak f=\eta\left[ \begin{matrix}
1\\
-1
 \end{matrix}\right]
\]
and:
\[
H_F(0,0,0)=2\eta^2\ne 0
\]

Then, in an arbitrary open neighbourhood of values  $\eta$ and $\zeta$ satisfying
$H_F(0,0,0)=0$ there are points for which $H_F(0,0,0)\ne 0$, or equivalently there exists a small perturbation of the 3-jets  of $\varphi$ and $\psi$ at $(0,0)$ for which  the Hessian matrix of $F$ at $P_S=((0,0,0)$ is  non degenerate.

\begin{claim}
$P_S=(0,0,[0:1])$ is a cone-like singularity  of the surface $\mathcal E$.
\end{claim}

Now it is enough to show that there exists a direction on which the quadratic form corresponding to the Hessian matrix of $F$ is  zero. The $q$-axis is one such direction:
\[
\left[
\begin{matrix}
0 & 0& q
 \end{matrix}\right]
\left[
\begin{matrix}
F_{xx} & F_{xy}& F_{xq}\\
F_{xy} & F_{yy}& F_{yq}\\
 F_{xq}& F_{yq}&0
 \end{matrix}\right]
 \left[
\begin{matrix}
0 \\ 0\\ q
 \end{matrix}\right]=
 \left[
\begin{matrix}
0 & 0& q
 \end{matrix}\right]
 \left[
\begin{matrix}
* & *& 0
 \end{matrix}\right]=0
\]

\begin{claim}
$P_S=(0,0,[0:1])$ is a saddle-type singularity for the lifted vector field $X$.
\end{claim}

We consider the vector field $X$ on $\mathbf R^3$, not its restriction to the equation surface $\mathcal E$. As seen before, $P_S=(0,0,0)$ is a singular point of $X$.

The linear part of $X$ at $P_S=(0,0,0)$ is given by:
\beq
A_X=\left[
\begin{matrix}
qF_{xq} & qF_{yq}& qF_{qq}+F_{q}\\
F_{xq} & F_{yq}& F_{qq}\\
 -qF_{xx}-F_{xy}&
 -qF_{xy}-F_{yy}&
 -qF_{xq}-F_{yq}-F_{x}
 \end{matrix}\right]
\eeq
computed at the singular point. Since:
\[
 -qF_{xq}-F_{yq}-F_{x}\big |_{(0,0,0)}=-\Pi'(0), \quad
F_{q}{(0,0,0)}=F_{qq}{(0,0,0)}=0
\]
we see that the $q$-axis is an eigendirection, and the corresponding eigenvalue is $-\Pi'(0)$.

It also follows that $0$ is an eigenvalue, as the first line is a multiple of the second one. From:
\[
\text{\rm Tr}\mspace{2 mu} A_X=-F_{x}, \quad F_{x}{(0,0,0)}=0
\]
the third eigenvalue is $\Pi'(0)$.

The  plane $x=0$ is invariant; the eigendirection corresponding to the $0$ eigenvalue does not belong to that plane, and the restriction of the linearized $X$ to  $x=0$ is a saddle with eigenvalues $\pm \Pi'(0)$.

The vector field $X$ on $\mathbf R^3$ has two separatrices at $P_S=(0,0,0)$, the stable and unstable manifolds, that are smooth curves and contained in the equation surface $\mathcal E$: $F$ is a first integral of $X$. These separatrices are invariant curves for the restriction of $X$ to $\mathcal E$.
\end{proof}

\begin{prop}
Assuming the genericity condition, the 1-jet of the  binary differential equation (\ref{bder}) can be reduced to the normal form:
\beq\label{nform1}
Y \de Y^2\pm 2X\de X\de Y=0
\eeq
\end{prop}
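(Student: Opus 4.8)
The plan is to start from the coefficients of the binary differential equation (\ref{bder}) written in the normal form of Proposition \ref{nfip}, i.e.\ with $j^2\varphi = \tfrac12\eta x^2$, $j^2\psi = 0$, and to compute the $1$-jets of $H_\varphi$, $M$ and $H_\psi$ at the origin in terms of $\eta$ and the $\zeta_i$. From the Hessian/mixed-derivative expressions one reads off that, to first order, $H_\varphi$, $M$ and $H_\psi$ are linear forms $\alpha_1 x + \alpha_2 y$, $\beta_1 x + \beta_2 y$, $\gamma_1 x + \gamma_2 y$ whose coefficients are the constants $A,B,C,D$ appearing in $\Pi(p)=Ap^3+Bp^2+Cp+D$; indeed we already know $A = H_{\psi,y}=\eta\zeta_5$, and similarly $H_{\psi,x}=0$, $M_x = H_{\varphi,y}$, etc., and the three leading derivatives reduce (after using the normal form and the Lagrangean identities $e\equiv b$, $f\equiv c$) to expressions in $\eta, \zeta_3, \zeta_4, \zeta_5$. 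The genericity condition (\ref{transcond}) guarantees $\eta\neq 0$, $\zeta_5\neq 0$ and $\zeta_4^2-\zeta_3\zeta_5\neq 0$, so the relevant $2\times 2$ minors of the linear coefficient matrix are nonzero.

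Next I would perform a linear change of coordinates in the $(x,y)$-plane — which, as the paper remarks, takes solutions of the binary equation to solutions of the transformed one — to simplify these three linear forms simultaneously. The key structural fact is that on a Lagrangean surface the three coefficients are not independent: the identity $af-be = ac - b^2$, $bg - cf = eg - f^2$ together with $e\equiv b$, $f\equiv c$ forces the discriminant $M^2 - 4H_\varphi H_\psi$ to have a degenerate (non-Morse) $1$-jet structure, and in fact $H_\varphi$ is, to first order, proportional to a single linear form (from (\ref{cal}), $af-be = \tfrac1{\sqrt{EW}}H_\varphi$ and at the inflection point $H_\varphi$ vanishes to first order in a constrained way). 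Concretely, using $H_\varphi = \eta\,\psi_{yy} + O(2) $-type relations coming from $j^2\varphi=\tfrac12\eta x^2$, one finds $j^1 H_\varphi$ is a multiple of one coordinate after rotation, and this is what collapses the quadratic differential equation to the stated form $Y\,\de Y^2 \pm 2X\,\de X\,\de Y = 0$: the coefficient of $\de X^2$ vanishes to first order, the coefficient of $\de X\,\de Y$ becomes $\pm 2X$, and the coefficient of $\de Y^2$ becomes $Y$, after scaling $X$ and $Y$ by suitable nonzero constants (possible precisely because of (\ref{transcond})). The sign ambiguity $\pm$ survives because it is governed by the sign of a combination like $\zeta_4^2-\zeta_3\zeta_5$ (equivalently, by whether $\Pi$ has one or three real roots, i.e.\ the two cases distinguished earlier), which cannot be changed by an orientation-preserving real linear isometry.

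The main obstacle I anticipate is bookkeeping: verifying that after imposing the normal form of Proposition \ref{nfip} the three linear coefficient forms really do admit a simultaneous reduction to $(0,\ \pm 2X,\ Y)$ by an \emph{allowable} change (one that preserves the class of the problem — in this subsection only a linear change in $(x,y)$ is needed, since we are reducing the $1$-jet of the equation, not the surface), and in particular that the coefficient of $\de X^2$ is forced to vanish to first order rather than merely being removable. This is where the Lagrangean constraint $e\equiv b$, $f\equiv c$ must be used essentially: without it one would generically get a coefficient $\propto X$ in front of $\de X^2$ as well, and the normal form would be different. I would isolate that vanishing as a short lemma (it follows from $j^1 H_\varphi \equiv 0$ along the $x$-axis direction forced by $j^2\varphi = \tfrac12\eta x^2$ and the identity $H_{\psi,x}(0,0)=0$, $M_x(0,0)=H_{\varphi,y}(0,0)$ recorded in the proof of the preceding theorem), and then the rest is a rescaling argument using (\ref{transcond}).
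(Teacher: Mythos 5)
Your overall strategy is the paper's: starting from the normal form of Proposition \ref{nfip}, compute the $1$-jets of the three coefficients $H_\varphi$, $M$, $H_\psi$ of (\ref{bder}), then use a linear change of the plane variables plus a rescaling of the equation to reach (\ref{nform1}), with the sign governed by $\zeta_4^2-\zeta_3\zeta_5$ (equivalently by the number of roots of $\Pi$). That is exactly the route the paper takes.

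However, the structural claim on which you hang the reduction is stated backwards, and as written that step would fail. With $j^2\varphi=\tfrac12\eta x^2$ and $j^2\psi=0$, the coefficient whose $1$-jet is forced to vanish is $H_\psi=\psi_{xx}\psi_{yy}-\psi_{xy}^2$ (a sum of products of functions each vanishing at the origin), i.e.\ the coefficient of $\de y^2$; by contrast $j^1H_\varphi=\eta\, j^1\varphi_{yy}=2\eta(\zeta_3x+\zeta_4y)$ and $j^1M=\eta\, j^1\psi_{yy}=2\eta(\zeta_4x+\zeta_5y)$ are generically nonzero and linearly independent (this is $\zeta_4^2-\zeta_3\zeta_5\ne0$). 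You instead assert that ``$j^1H_\varphi\equiv 0$ along the $x$-axis direction'' forces the $\de X^2$ coefficient to vanish; in fact $H_{\varphi,x}(0,0)=2\eta\zeta_3$ is generically nonzero, and if your claim were taken literally the $1$-jet of the equation would reduce to $M\,\de x\,\de y=0$, which is degenerate and not reducible to (\ref{nform1}). The correct picture is: the $1$-jet is $2\eta(\zeta_3x+\zeta_4y)\de x^2+2\eta(\zeta_4x+\zeta_5y)\de x\,\de y=0$, and the absence of a $\de X^2$ term in the target form is not automatic but is \emph{arranged} by the coordinate change --- one must take $x=\beta Y$, $y=\gamma X+\delta Y$ (i.e.\ $\alpha=0$, interchanging the roles of the two variables), so that the new $\de X^2$ coefficient receives contributions only from the old $\de y^2$ coefficient, whose $1$-jet vanishes by the Lagrangean structure. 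Relatedly, a rotation followed by a diagonal rescaling is not enough: killing the unwanted linear terms also requires the shear condition $\delta\zeta_5+\beta\zeta_4=0$, and it is there, together with $\gamma^2\zeta_5=\pm2$ and $\beta(\delta\zeta_4+\beta\zeta_3)=\pm1$, that the full genericity condition (\ref{transcond}) enters. Once the roles of $H_\varphi$ and $H_\psi$ (and of $\de x^2$ and $\de y^2$) are straightened out, your plan coincides with the paper's proof.
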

\begin{proof}
The 1-jet of the binary differential equation (\ref{bder}) is:
\[
\eta(\zeta_4x+\zeta_5y)\de x\de y+\eta(\zeta_3x+\zeta_4y)\de x^2=0
\]

Since $\eta\ne 0$ it can be omitted. Consider the linear change of coordinates:
\[
x=\alpha X+\beta Y, \quad y=\gamma X+\delta Y
\]
and the equation in these coordinates:
\[
(A_1X+A_2Y) \de Y^2+(B_1X+B_2Y)\de X\de Y+(C_1X+C_2Y)\de X^2=0
\]
Then:
\[
\left[\begin{matrix}A_1&A_2\\B_1&B_2\\C_1&C_2\end{matrix}\right]=
\left[\begin{matrix}
\delta^2&\beta\delta&\beta^2\\
2\gamma\delta&\alpha\delta+\beta\gamma&2\alpha\beta\\
\gamma^2&\alpha\gamma&\alpha^2\end{matrix}\right]
\left[\begin{matrix}0&0\\ \zeta_4&\zeta_5\\ \zeta_3&\zeta_4\end{matrix}\right]
\left[\begin{matrix}\alpha&\beta\\ \gamma&\delta\end{matrix}\right]
\]
and as:
\[
C_1=\alpha\gamma(\alpha \zeta_4+\gamma \zeta_5)+\alpha^2(\alpha \zeta_3+\gamma \zeta_4)
\]
if we take $\alpha=0$ we obtain $C_1=0$, and in fact $C_2=0$ as well.

Now:
\[
A_1=\beta\delta \gamma \zeta_5+\beta^2 \gamma  \zeta_4=\beta \gamma (\delta \zeta_5+\beta   \zeta_4)
\]
and if we take:
\[
\alpha=0, \quad \delta \zeta_5+\beta   \zeta_4=0
\]
we obtain $C_1=C_2=A_1=0$ and also:
\begin{align*}
A_2&=\beta^2( \delta \zeta_4+\beta   \zeta_3)\\
B_1&=\beta \gamma^2\zeta_5\\
B_2&=\beta\gamma(\beta   \zeta_4+  \delta \zeta_5)=0
\end{align*}

With $\beta$ different from zero, the coefficients can be divided by $\beta$.
We can choose $\gamma$ so that:
\[
B_1=\gamma^2\zeta_5=\pm 2, \quad \hbox{assuming }\zeta_5\ne 0
\]
and $\delta$ and $\beta$ so that:
\begin{align*}
\delta \zeta_5+\beta   \zeta_4&=0\\
\beta( \delta \zeta_4+\beta   \zeta_3)&=\pm 1, \quad \hbox{assuming }\zeta_4^2-\zeta_3\zeta_5\ne 0
\end{align*}
In fact, if $\zeta_4\ne 0$ we can solve the first condition for $\beta$ and upon substitution the second one becomes:
\[
-\dfrac{\zeta_5}{\zeta_4^2}(\zeta_4^2-\zeta_3\zeta_5)\delta^2=\pm 1
\]
which can be solved if  $\zeta_4^2-\zeta_3\zeta_5\ne 0$ leading to $\beta\ne 0$.

If  $\zeta_4=0$ the first condition gives $\delta=0$ and the second becomes:
\[
\beta^2   \zeta_3=\pm 1
\]
We can find such a $\beta\ne 0$ if $\zeta_3\ne 0$; but this is equivalent to $\zeta_4^2-\zeta_3\zeta_5\ne 0$ since $\zeta_5\ne 0$ and $\zeta_4=0$.

The genericity conditon that the zeros of $\Pi(q)=\eta(\zeta_5 +2\zeta_4 q+\zeta_3 q^2)q$ be simple translates into:
\[
\eta\ne 0, \quad \zeta_5\ne 0, \quad \zeta_4^2-\zeta_3\zeta_5\ne 0
\]
and the proof is finished.
\end{proof}

\begin{prop}
At a generic inflection point the discriminant curve has a cusp singularity.
The genericity condition that the root $q=0$ be simple is equivalent to a transversality condition: the direction $[0:1]$ is not tangent to the discriminant curve at the cusp point.
\end{prop}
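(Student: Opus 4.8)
The plan is to read off the $2$- and $3$-jets of the discriminant $\delta = M^2 - 4H_\varphi H_\psi$ of the reduced equation~(\ref{bder}) at the inflection point from the normal form of Proposition~\ref{nfip}, and then to apply the splitting lemma. A direct computation from that normal form gives, at the origin, $H_\psi = H_{\psi,x} = H_{\psi,y} = 0$, so $H_\psi$ vanishes to second order, while $H_\varphi$ and $M$ vanish only to first order; their $1$-jets are nonzero multiples of $\zeta_3 x + \zeta_4 y$ and $\zeta_4 x + \zeta_5 y$ respectively (consistently with $M_x = H_{\varphi,y}$ at the origin and with $\mathfrak f = (M_y,-M_x) = \eta(\zeta_5,-\zeta_4)$), and the quadratic part of $H_\psi$ is a nonzero multiple of $(\zeta_2\zeta_4 - \zeta_3^2)x^2 + (\zeta_2\zeta_5 - \zeta_3\zeta_4)xy + (\zeta_3\zeta_5 - \zeta_4^2)y^2$. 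Since $H_\varphi H_\psi$ then vanishes to third order, the $2$-jet of $\delta$ coincides with that of $M^2$, hence is a nonzero multiple of $(\zeta_4 x + \zeta_5 y)^2$ under the genericity condition~(\ref{transcond}). In particular the origin is a singular point of $\delta = 0$ whose tangent cone is the double line $\ell : \zeta_4 x + \zeta_5 y = 0$.

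To identify the singularity I would pass to coordinates $(x,u)$ with $u = \zeta_4 x + \zeta_5 y$ (a legitimate change since $\zeta_5 \neq 0$); there the $2$-jet of $\delta$ is a nonzero multiple of $u^2$, with no $x^2$ or $xu$ term, so the splitting lemma makes $\delta$ right equivalent to $\pm u^2 + g(x)$, where $g(x)$ is the restriction of $\delta$ to the solution $u = \sigma(x)$ of $\partial\delta/\partial u = 0$. Because $\sigma(0) = \sigma'(0) = 0$, the function $g(x)$ agrees with $\delta|_{u=0}$ modulo $O(x^4)$, so it suffices to show the cubic term of $\delta|_{u=0}$, that is of $j^3\delta$ restricted to $\ell$, is nonzero. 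On $\ell$ the $1$-jet of $M$ vanishes, hence so does the cubic part of $M^2$, and therefore $j^3\delta|_\ell = -4\,(j^1 H_\varphi)|_\ell \cdot (j^2 H_\psi)|_\ell$. The key computation is that after the substitution $y = -\zeta_4 x/\zeta_5$ the linear form $j^1 H_\varphi$ becomes a nonzero multiple of $(\zeta_4^2 - \zeta_3\zeta_5)\,x$ and the quadratic form $j^2 H_\psi$ becomes a nonzero multiple of $(\zeta_4^2 - \zeta_3\zeta_5)^2\,x^2$ --- in each case the discriminant $\zeta_4^2 - \zeta_3\zeta_5$ appears after collecting a perfect square. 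Hence $g(x)$ has a nonzero cubic leading term, a nonzero multiple of $\eta(\zeta_4^2 - \zeta_3\zeta_5)^3 x^3$ by~(\ref{transcond}), so $\delta$ is right equivalent to $\pm u^2 \pm x^3$ and $\delta = 0$ has an ordinary cusp with tangent line $\ell$.

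For the second assertion, the tangent line of this cusp at the origin is precisely its tangent cone $\ell : \zeta_4 x + \zeta_5 y = 0$, i.e.\ the direction $[\de x : \de y] = [\zeta_5 : -\zeta_4]$, which coincides with $[0:1]$ if and only if $\zeta_5 = 0$. On the other hand $\hat\Pi(q) = \eta\, q(\zeta_3 q^2 + 2\zeta_4 q + \zeta_5)$, so $\hat\Pi'(0) = \eta\zeta_5$ and the root $q = 0$ is simple exactly when $\eta\zeta_5 \neq 0$, that is (since $\eta \neq 0$) exactly when $\zeta_5 \neq 0$. Hence ``$q = 0$ is a simple root of $\hat\Pi$'' and ``$[0:1]$ is not tangent to the discriminant curve at the cusp'' express the same condition $\zeta_5 \neq 0$, which is what had to be shown.

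The step I expect to be the main obstacle is the algebraic identity in the second paragraph: verifying that $j^1 H_\varphi$ and $j^2 H_\psi$, restricted to $\ell$, collapse to powers of $x$ times powers of $\zeta_4^2 - \zeta_3\zeta_5$, so that the $3$-jet of $\delta$ along $\ell$ is controlled by the single nonvanishing quantity $\eta(\zeta_4^2 - \zeta_3\zeta_5)^3$. Once that is checked, everything else --- that the $2$-jet of $\delta$ is a nonzero perfect square, that no higher-order coincidence intervenes, and that the splitting lemma applies --- follows directly from the three inequalities $\eta \neq 0$, $\zeta_5 \neq 0$, $\zeta_4^2 - \zeta_3\zeta_5 \neq 0$ of~(\ref{transcond}).
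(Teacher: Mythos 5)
Your argument is correct, and it follows the same skeleton as the paper's proof (quadratic part of $\Delta=M^2-4H_\varphi H_\psi$ is the nonzero square of $Q\propto\zeta_4x+\zeta_5y$; cubic part not divisible by $Q$; tangency of $[0:1]$ to the cusp read off from $\Delta_2(0,1)\propto(\eta\zeta_5)^2$). The one place where you genuinely diverge is the crucial non-divisibility step, and there your treatment is sharper than the paper's. The paper observes only that ``$Q$ divides the cubic terms'' is an algebraic condition on the $4$-jet of $(\varphi,\psi)$ which is not an identity, and so excludes it by an additional open-dense genericity assumption. You instead compute: since $\Delta_3=2M_1M_2-4H_{\varphi,1}H_{\psi,2}$ and $M_1\propto Q$, the entire $4$-jet contribution (which enters only through $M_2$) is automatically divisible by $Q$, so divisibility of $\Delta_3$ by $Q$ is governed by $H_{\varphi,1}H_{\psi,2}$ alone, i.e.\ by the $3$-jet; restricting to $Q=0$ collapses both factors to powers of $\zeta_4^2-\zeta_3\zeta_5$ and yields a nonzero multiple of $\eta(\zeta_4^2-\zeta_3\zeta_5)^3x^3/\zeta_5^3$. (I checked this: on $\ell$ one has $j^1H_\varphi=-2\eta(\zeta_4^2-\zeta_3\zeta_5)x/\zeta_5$ and, because $j^1\psi_{yy}$ is itself proportional to $Q$, $j^2H_\psi=-(j^1\psi_{xy})^2|_\ell\propto(\zeta_4^2-\zeta_3\zeta_5)^2x^2$.) The payoff is a stronger statement: the cusp is forced by the genericity condition (\ref{transcond}) already in place, with no further hypothesis on the $4$-jet, whereas the paper's phrasing suggests an extra condition is being imposed. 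Your splitting-lemma packaging of the $A_2$ recognition is equivalent to the paper's ``$\Delta_2=Q^2$, $Q\nmid\Delta_3$'' criterion, and the second assertion is handled identically in both proofs.
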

\begin{proof}
The plane curve $\Delta (x,y)$ has a cusp singularity at a singular point $P$, that we take to be the origin,  if:
\begin{itemize}
\item The quadratic part of $\Delta$ at $P$ is a nonzero square $Q^2$
\item The cubic part  of $\Delta$ at $P$ is not divisible by $Q$.
\end{itemize}

We can  compute explicitly the second order terms $\Delta_2$ in $\Delta=M^2-4H_{\varphi}H_{\psi}$:
\[
\Delta_2(x,y)=\left[\eta (\zeta_4 x+\zeta_5y)\right]^2
\]
The condition of $\zeta_4 x+\zeta_5y$ dividing
the cubic terms is an algebraic condition on the $4$-jet of $\varphi$ and $\psi$, and not an identity, so $\zeta_4 x+\zeta_5y$ does not divide the cubic terms with the coefficients of that $4$-jet in an open dense set. The discriminant curve has a cusp point at the origin, and the tangent line there is defined by a vector $(x,y)$ such that $\Delta_2(x,y)=0$:
\[
\zeta_4 x+\zeta_5y=0,\quad \eta\ne 0
\]

The genericity condition above, the root $q=0$ being simple, is $\eta\ne 0$ and $\zeta_5\ne 0$, and this means that the direction $[0:1]$ is not tangent to the discriminant curve at the cusp point:
\[
\Delta_2(0,1)=\left[\eta \zeta_5\right]^2\ne 0
\]
\end{proof}

We consider the lifted vector field $X$ on $\mathbf R^3$; we are interested on its critical points on the $p$-axis. They correspond to the zeros of $\Pi(p)$, and therefore there are one or three critical points.

The eigenvalues of $X$  are:
 \begin{align}
 \mu_1&=-F_{xp}-pF_{yp}-F_{y}\big |_{(0,0,p)}\\
 \notag
\mu_2&=0\\
\notag
\mu_3&=F_{xp}+pF_{yp}\big |_{(0,0,p)}
\end{align}

We have seen that $P_S=(0,0,\infty)$($q=0$) is always a critical point, and there $\mu_1=-\mu_3$.

The polynomial $\Pi(p)$ is a third order polynomial but the coefficient of $p^3$ is zero. The other critical values correspond to the zeros of  $\Pi_2(p)$, if they are real, where $\Pi_2(p)$ is $\Pi(p)$ seen as a second order polynomial:
\beq
\Pi_2(p)=\eta (\zeta_5 p^2+2\zeta_4 p+\zeta_3)
\eeq

We have:
\begin{align*}
\mu_1 =&-2\eta\left(\zeta_5p+\zeta_4\right)\\
\mu_3=&\eta\left(\zeta_4+\zeta_5p\right)
\end{align*}
and therefore:
\[
\mu_1=-2\mu_3
\]

Thus there are two possible types of critical points for the generic binary differential equation (\ref{bder}). The first case below corresponds to the existence of two extra singular points of the lifted vector field, that are necessarily saddles:

\begin{theorem}
For a generic Lagrangean surface, the phase portrait of the aymptotic lines around an inflection point is topologically conjugate to the phase portraits around the origin of:
\begin{itemize}
\item $Y \de Y^2 -2X\de X\de Y +Y^2 \de X^2=0, \quad$ (Fig.\ref{cusp1&3}, left)
\item $Y \de Y^2 +2X\de X\de Y +Y^2 \de X^2=0, \quad$ (Fig.\ref{cusp1&3}, right)
\end{itemize}
\end{theorem}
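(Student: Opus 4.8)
The plan is to reduce the global description of the asymptotic foliation to a study of the lifted vector field $X$ on the equation surface $\mathcal E$, and then invoke the normal form already established. First I would use the previous Proposition reducing the $1$-jet of the binary differential equation (\ref{bder}) to $Y\de Y^2 \pm 2 X\de X\de Y=0$; since — as noted in the excerpt — the topological type of the asymptotic lines around the inflection point depends only on the $2$-jet of the coefficients, and in fact (by the Proposition on the critical points of $X$) the number and nature of the singularities of $X$ depends only on the $1$-jet, the two model equations in the statement are obtained by adding a harmless $Y^2\de X^2$ term, chosen so that the discriminant curve $M^2-4H_\varphi H_\psi = 0$ becomes the standard cusp $X^2 - Y^3 = 0$ (up to the sign and scaling fixed by the normal form). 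I would check that these two models indeed have a cuspidal discriminant with the transversality condition that $[0:1]$ is not tangent to the cusp, consistent with the previous Proposition, and that they are the $2$-jets realizing the two generic sign choices.

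Next I would count and classify the singular points of the lifted field $X$ on $\mathcal E$ for each model. By the earlier analysis, on the $p$-axis (here the $q$-axis) the singularities are the roots of $\Pi(p) = \eta(\zeta_5 p^2 + 2\zeta_4 p + \zeta_3)$ together with the point $P_S = (0,0,[0:1])$ which is always critical; the genericity condition (\ref{transcond}) guarantees these roots are simple. In the $+$ model the quadratic $\zeta_5 p^2 + 2\zeta_4 p + \zeta_3$ has two real roots, giving two extra singular points of $X$, and from the eigenvalue computation $\mu_1 = -2\mu_3$, $\mu_2 = 0$ with $\mu_3 \ne 0$ these are hyperbolic saddles (the direction of zero eigenvalue transverse to the invariant plane $x=0$, restriction a saddle). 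In the $-$ model the quadratic has no real roots, so $P_S$ is the only singular point, and by the Theorem above it is a cone-like Morse singularity of $\mathcal E$ and a saddle-type singularity of $X$. I would then state the local phase portrait of $X$ near $P_S$ (a genuine saddle on the cone $\mathcal E$) and near the two auxiliary saddles, and note that away from the discriminant the foliation is a regular pair of transverse direction fields so no further singularities appear in a small punctured neighbourhood.

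Then I would assemble the global (i.e. small-neighbourhood) phase portrait on $\mathcal E$ by following separatrices. Each singularity of $X$ contributes two smooth separatrices (stable/unstable manifolds), contained in $\mathcal E$ because $F$ is a first integral of $X$; projecting down to the $(x,y)$-plane via the double cover $\mathcal E \to \Rd$ branched along the criminant curve (whose projection is the cuspidal discriminant), I obtain the configuration of asymptotic separatrices emanating from the cusp. The resulting pictures match exactly cases (1) and (3) of the three cuspidal-discriminant normal forms of \cite{tari08} — case (2), which would require a different sign configuration incompatible with $\mu_1 = -2\mu_3$ on a Lagrangean surface, does not occur — and this is Fig.\ref{cusp1&3}. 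Finally, to upgrade from "same singularities and separatrix configuration" to genuine topological conjugacy I would appeal to the standard structural-stability/classification results for binary differential equations with a cuspidal discriminant (Bruce–Tari, Tari), under which the $2$-jet in the generic stratum determines the topological type; combined with the fact that our coefficients are $2$-determined here, this gives the claimed topological conjugacy to the two explicit models.

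The main obstacle I expect is the last step: justifying that the explicit $2$-jet models capture the full local topology of the original (non-polynomial) binary differential equation. The lifted field $X$ is only $C^0$ (not smooth) at $P_S$ because $\mathcal E$ has a cone singularity there, so one cannot quote smooth Hartman–Grobman at $P_S$; instead one must argue via the blow-up/resolution of the cone, show that after resolution $X$ becomes smooth with hyperbolic singularities whose eigenvalue ratios are determined by the $1$-jet, and then glue a conjugacy across the separatrices using that the auxiliary saddles are hyperbolic and the regular region is foliated by a trivial transverse pair. The sign-distinction between the two cases — two extra real saddles versus none — is clean, so once the resolution argument is in place the two pictures follow without further case analysis.
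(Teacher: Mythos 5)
Your overall strategy---put the $1$-jet in the form (\ref{nform1}), analyse the singularities of the lifted field, and conclude via the Tari/Bruce--Tari classification---is the same strategy the paper follows, and your singularity count and the eigenvalue relation $\mu_1=-2\mu_3$ forcing the extra singularities to be saddles are exactly the paper's observations. But there is a genuine gap at the central step. You pass from the $1$-jet normal form $Y\de Y^2\pm 2X\de X\de Y=0$ to the full models by ``adding a harmless $Y^2\de X^2$ term.'' That is not what happens: after the linear reduction the $2$-jet is $(X+a_2)\de Y^2\pm 2(X+b_2)\de X\de Y+c_2\de X^2=0$ with \emph{arbitrary} quadratic remainders $a_2,b_2,c_2$, and the entire content of the paper's proof is to remove them. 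One makes a quadratic change of variables $x=X+P_2$, $y=Y+Q_2$ and multiplies the equation by a unit $1+R_1$; the coefficients of $X^2$ and $XY$ in the new $\de X^2$-coefficient are killed by two of the coefficients of $Q_2$, and the conditions $A_2=B_2=0$ become a linear system of six equations in the six remaining unknowns (the three coefficients of $P_2$, the two of $R_1$, and $q_2^2$) whose determinant must be checked to be nonzero. Without this computation (or, alternatively, a precise citation of Tari's theorem in a form that classifies by the invariant data --- cuspidal discriminant, transversality of $[0:1]$, number and hyperbolicity of the singularities of the lift --- rather than by formal $2$-jet equivalence), your assertion that ``our coefficients are $2$-determined here'' is exactly the statement to be proved, not a fact you may assume. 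Your closing concern about the lifted field being only continuous at the cone point is legitimate, but it is precisely what the appeal to \cite{tari08,btjde} is meant to absorb; it cannot substitute for the missing reduction.

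A secondary error: your sign correspondence is reversed. For the model $Y\de Y^2+\epsilon\, 2X\de X\de Y+Y^2\de X^2=0$ one computes $\Pi(p)=p\left(p^2+2\epsilon\right)$, so it is the \emph{minus} model that has the two extra real roots (three singularities, all saddles) and the \emph{plus} model that has the single singularity at $P_S$; the paper states this when it says the first listed equation corresponds to the two extra saddles. The error is harmless for the dichotomy itself but would propagate into the identification with the left and right pictures of Fig.~\ref{cusp1&3} and with the cases of \cite{tari08}.
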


\begin{figure}
\begin{center}
\includegraphics[width= \linewidth]{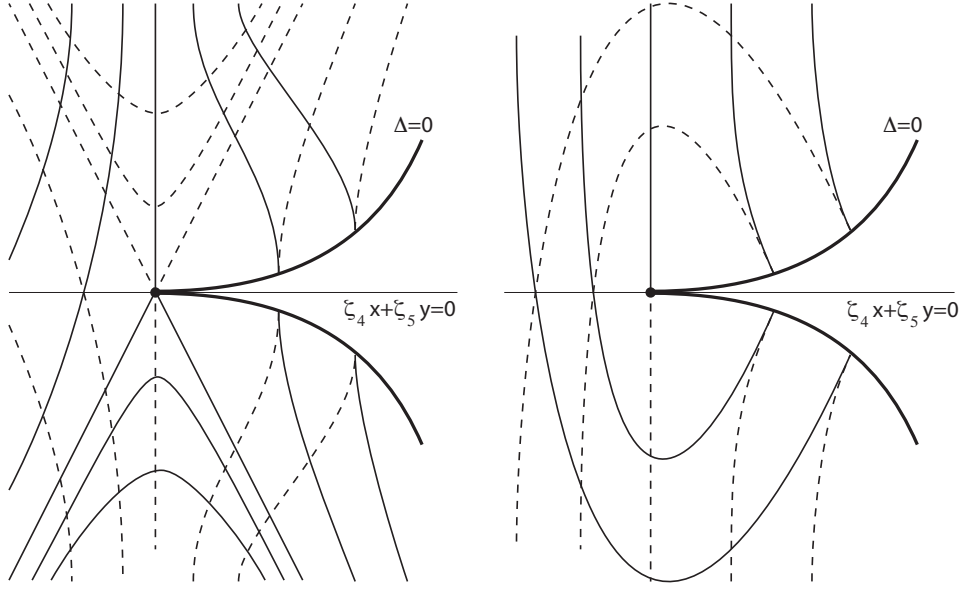}
\end{center}
\caption{Phase portraits for the asymptotic lines of generic Lagrangean surfaces}\label{cusp1&3}
\end{figure}

\begin{proof}
We follow the strategy of \cite{tari08, btjde}: we assume the 1-jet of the binary differential equation to be already in the form (\ref{nform1}) so that the 2-jet be:
\[
(x+a_2(x,y))\de y^2\pm 2(x+b_2(x,y) \de x \de y +c_2(x,y) \de x^2=0
\]
 and make a change of coordinates preserving the origin, whose linear part is the identity:
\[
x=X+P_2(X,Y), \quad y=Y+Q_2 (X,Y)
\]
with $P_2$and $Q_2$ homogeneous polynomials of degree 2, and we multiply the binary differential equation by a linear polynomial $1+R_1(X,Y)=1+r_1^0 X+r_1^1 Y$. The theorem is proved if we find the coefficients in $P_2$, $Q_2$ and $R_1$ so that the 2-jet of the new equation has the required form.

That 2-jet  after the change of variables becomes \cite{btjde}:
\[
(X+A_2(X,Y))\de Y^2\pm 2(X+B_2(X,Y)) \de X \de Y +C_2(X,Y) \de X^2=0
\]
where:
\begin{align*}
A_2=& a_2+Q_2\pm X\dfrac{\partial P_2}{\partial Y}+2Y\dfrac{\partial Q_2}{\partial Y}+YR_1\\
B_2=& b_2\pm P_2+Y\dfrac{\partial Q_2}{\partial X}\pm X\left(\dfrac{\partial P_2}{\partial X}+\dfrac{\partial Q_2}{\partial Y}+R_1\right)\\
C_2=&c_2\pm 2X\dfrac{\partial Q_2}{\partial X}
\end{align*}
The coefficient of $Y^2$ in $C_2$ is not changed (it can  later be made equal to 1 by a simultaneous rescaling of $X$ and $Y$), but the coefficients of $X^2$ and $XY$ vanish for a suitable choice of $q_2^0$ and $q_2^1$ in $Q_2 (X,Y)=q_2^0 X^2+q_2^1 XY +q_2^2 Y^2$.

The conditions:
\[
A_2=0, \quad B_2=0
\]
give six linear equations on the six remaining variables: the three coefficients of $P_2$, the two coefficients of $R_1$ and $q_2^2$. It is easy to see that the determinant of the system does not vanish.

\end{proof}

\begin{rem}
In general, there exists another type of critical point, when there are two extra singular points, a saddle and a node \cite{tari08}.
\end{rem}

\section{Global theory}

It is proved in \cite{little,asperti} that
if $S$ is a compact  surface with non vanishing Euler characteristic $\chi (S)$, then there exists at least  one inflection point or one umbilic point, and a line field on $S$ whose singularities are exactly the inflection and umbilic points. 

This can be improved for Lagrangean surfaces:

\begin{theorem}
Let $\mathcal L$ be a  compact and orientable Lagrangean surface with nonzero Euler characteristic $|\chi(\mathcal L)|$.  Then there exist at least an umbilic  point and an inflection point; in the generic case there are at least $|\chi(\mathcal L)|$ umbilic points, and at least $3|\chi(\mathcal L)|$ inflection points.
\end{theorem}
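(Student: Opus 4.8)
The strategy is to exploit the Lagrangean constraint to produce a line field on $\mathcal{L}$ whose singularities are exactly the inflection and umbilic points, but—unlike the general surface case—with control on the \emph{local index} of the field at each type of singularity. First I would recall the construction from \cite{little,asperti}: on a compact surface in $\mathbf{R}^4$ the asymptotic directions (when they exist) together with a suitable extension over elliptic regions define a line field singular precisely at umbilic and inflection points, so the sum of its indices equals $\chi(\mathcal{L})$ by Poincar\'e--Hopf for line fields. The point is then to compute the index at each singularity in the Lagrangean setting.

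The key input is that on a Lagrangean surface \emph{every} inflection point is a flat inflection point (by the Proposition on vanishing Gaussian curvature in the discriminant curve), and that generically, by the Theorem on phase portraits, the asymptotic line field near such a point is topologically conjugate to one of the two models $Y\,\de Y^2 \mp 2X\,\de X\,\de Y + Y^2\,\de X^2 = 0$. For each of these two normal forms the index of the associated line field is a fixed computable number (read off from the phase portraits in Fig.~\ref{cusp1&3}); I would compute it directly from the lifted vector field $X$ on $\mathcal{E}$, using that $\mathcal{E}$ has a Morse cone-like singularity and $X$ a saddle-type singularity there, so the index contribution is that of the projection of a saddle on a cone. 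One expects this index to be $-1/2$ or a similar half-integer, consistent with the cusp of the discriminant; crucially it is the \emph{same} at every generic inflection point because there are only the two normal forms and they have the same index. At an umbilic point the curvature ellipse is a circle centred at the origin; I would similarly compute the index of the asymptotic (or, in the elliptic surroundings, the extended) line field there—this should match the classical index of a generic umbilic, a half-integer such as $-1/2$ or $+1/2$.

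Granting these local index computations, the global count is bookkeeping: if $i_{\mathrm{infl}}$ and $i_{\mathrm{umb}}$ denote the (constant, generic) indices, and $\mathcal{L}$ has $N_i$ inflection points and $N_u$ umbilic points, then $N_i\, i_{\mathrm{infl}} + N_u\, i_{\mathrm{umb}} = \chi(\mathcal{L})$. Since each index has absolute value $1/2$, we get $N_i + N_u \ge 2|\chi(\mathcal{L})|$; refining by the fact that inflection points, being cusps of the discriminant, contribute with a definite sign and come in the stable configurations guaranteed by the stability Theorem, one sharpens this to at least $|\chi(\mathcal{L})|$ umbilic points and at least $3|\chi(\mathcal{L})|$ inflection points. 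The factor $3$ should emerge because near each umbilic the separatrix structure of the line field forces (via an Euler-characteristic count on the blown-up surface, or a direct Bendixson-type argument) at least three inflection cusps, or alternatively because the discriminant curve $\{M^2 - 4H_\varphi H_\psi = 0\}$ is a real algebraic curve whose only singularities on $\mathcal{L}$ are cusps, and a parity/degree argument on its components gives the multiplicity $3$.

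\medskip

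\textbf{Main obstacle.} The delicate step is the index computation at the umbilic points and the passage from the crude bound $N_i + N_u \ge 2|\chi|$ to the separate bounds $N_u \ge |\chi|$ and $N_i \ge 3|\chi|$: this requires knowing that umbilic and inflection indices have \emph{opposite} signs (or a precise relation), and that every umbilic is ``surrounded'' by at least three inflection points in the combinatorics of the asymptotic foliation. Establishing the number $3$ rigorously—rather than just $1$—is where genericity of $\mathcal{L}$ must be used in full force, presumably by analysing the global structure of the bivalued asymptotic line field on the lifted surface $\mathcal{E}$ and applying Poincar\'e--Hopf there, where the correspondence between criminant-curve singularities and discriminant cusps makes the factor $3$ visible.
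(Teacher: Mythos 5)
Your proposal runs a single Poincar\'e--Hopf count on one (bivalued) direction field singular at both umbilics and inflections, and this can only ever yield $|\sum \hbox{ind}| = |\chi(\mathcal L)|$, i.e.\ $N_u + N_i \ge 2|\chi(\mathcal L)|$ when all indices are $\pm 1/2$. You correctly identify that splitting this into $N_u \ge |\chi|$ and $N_i \ge 3|\chi|$ is the real difficulty, but none of the mechanisms you sketch for the factor $3$ (a Bendixson-type separatrix count forcing three inflections per umbilic, or a degree/parity argument on the discriminant curve) is substantiated, and they are not what makes the theorem work. The missing idea is a \emph{second, independent} field that only the Lagrangean structure provides: the isoclinic line field $\mathfrak h$ spanned by $\mathbf J^{-1}\mathcal H$, where $\mathbf J$ is the complex structure on $\Rq\cong\Cd$. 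Via the Wintgen inequality $\mathcal H^2\ge K+|\kappa|$, on a Lagrangean surface every zero of $\mathcal H$ is automatically an umbilic, so $\mathfrak h$ is singular \emph{exactly} at the umbilics and has integer indices $\pm 1$; Poincar\'e--Hopf applied to $\mathfrak h$ alone already gives $N_u\ge|\chi(\mathcal L)|$. The count of inflections then comes from comparing $\mathfrak h$ with the mean directional field $\mathfrak H$ at each umbilic: a normal-form computation shows $\hbox{ind}\,\mathfrak H=-\frac12\,\hbox{ind}\,\mathfrak h$ there, whence $\sum_{\rm umbilics}\hbox{ind}\,\mathfrak H=-\frac12\chi(\mathcal L)$ and therefore $\sum_{\rm inflections}\hbox{ind}\,\mathfrak H=\chi(\mathcal L)-\bigl(-\frac12\chi(\mathcal L)\bigr)=\frac32\chi(\mathcal L)$, giving $N_i\ge 3|\chi(\mathcal L)|$ since each inflection contributes $\pm 1/2$. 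Without the sign relation at umbilics the factor $3/2$ cannot be obtained.

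A secondary but genuine error: you assert that the two topological models $Y\de Y^2\mp 2X\de X\de Y+Y^2\de X^2=0$ at inflection points ``have the same index.'' They do not. The model with the minus sign has $\hbox{ind}\,\mathfrak H=-1/2$ (a $D_3$/star singularity) and the one with the plus sign has $\hbox{ind}\,\mathfrak H=+1/2$ (lemon or monstar), so inflection points do \emph{not} all contribute with a definite sign; the bound $N_i\ge 3|\chi(\mathcal L)|$ is exactly the statement that a signed sum equal to $\frac32\chi(\mathcal L)$ of terms of absolute value $1/2$ requires at least $3|\chi(\mathcal L)|$ terms. Finally, note that the field to use away from the hyperbolic region is the mean directional field $\mathfrak H$ (defined by $\sff(u)=m\mathcal H$), not an ad hoc extension of the asymptotic field over the elliptic region; on a Lagrangean surface its binary equation takes the special form $A\de x^2+2B\de x\de y-A\de y^2=0$, which is what makes the index computations at umbilics and inflections tractable.
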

\begin{proof}
The identification $\Rq\cong \Cd$ given by $(z,w)\mapsto (x,y,u,v)$, where $z=x+iu$,  $w=y+iv$, allows the definition of a real operator $\mathbf J$ in $\Rq$ representing the multiplication by $i$ and given by:
\[
\mathbf J(x,y,u,v)=(-u,-v,x,y)
\]

We consider the isoclinic line field $\mathfrak h$ spanned by ${\mathbf J}^{-1}\mathcal H$  \cite{eu}.

\begin{lemma}
The  singularities of  the isoclinic line field $\mathfrak h$ on the Lagrangean surface $\mathcal L$  are exactly the umbilic points. 
\end{lemma}

\begin{proof}
All umbilic points are singularities of $\mathfrak h$, since at umbilic points we have $\mathcal H=0$; now if $p$ is a singularity, we have:

\begin{wineq}\cite{wintgen}
If $S$ is an immersed  surface in $\Rq$, then at every point $p\in S$ we have the inequality:
\beq
\label{wineq}
\mathcal H^2\ge K+|\kappa |
\eeq
The point $p$ is a circle point if and only if $\mathcal H^2 = K+|\kappa |$.
\end{wineq}

It follows  that, at $p$:
\[
K+|K|\le 0, \hbox{ \ and therefore \ } K+|K|= 0
\]
and so $p$ is a minimal point and a circle point, an umbilic point.

In particular, all minimal points are necessarily umbilic points in a Lagrangean surface, and at an umbilic the Gaussian and normal curvatures are nonpositive:
\[K=\kappa\le 0
\]
\end{proof}

In a generic situation the number of umbilic points is finite, and from the Poincar\'e-Hopf theorem it follows that:
\[
\sum_{\rm umbilics}\hbox{ind } \mathfrak h=\chi(\mathcal L)
\]
The estimate for the number of umbilic points follows from this  relation and from the fact that the indices of $\mathfrak h$ at generic critical points  are $\pm 1$.

The line through the mean curvature vector $\mathcal H (p)$ meets the ellipse of 
curvature $\mathcal E(p)$ at two points. The   unitary tangent vectors on $T_p S$ whose image by the second fundamental form is one of those two points span two orthogonal directions,  called  $\mathcal H$-directions. The mean directional field $\mathfrak H$ is the field of these two orthogonal directions.

The singularities of this 
field, called $\mathfrak H$-singularities, are the points where either $\mathcal H=0$ (minimal points) or at which the ellipse of curvature becomes a radial line segment 
(inflection points).

The differential equation of mean directional lines is given 
by:
\[
\sff(u)=m\mathcal H
\]
where $m\in \mathbf R$. Eliminating $m$  we have a binary differential 
equation:
\beq\label{bdemean}
A(x,y) \de x^2 + 2B(x,y) \de x \de y + C (x,y) \de y^2 = 0
\eeq
where: 
\begin{align*}
A =& A(x,y) = (ag-ce)E + 2(be-af)F\\
\notag
B = &B(x,y) = (bg-cf)E + (be-af)G\\
\notag
C =& C (x,y) = 2(bg-cf)F + (ce-ag)G
\end{align*}
The $\mathfrak H$-singularities are determined by $A = B = C = 0$. But 
it is immediate that the equation $EC = 2FB-GA$ holds, and the equation $C=0$ is redundant. 

For Lagrangean surfaces the binary differential equation (\ref{bdemean}) has the form:
\beq
A \de x^2+\
2B \de x \de y 
-A \de y^2 = 0
\eeq
with critical points at:
\[
A=EM-2FH_{\varphi}=0,\quad B=E^2H_{\psi}- EFM+F^2H_{\varphi}-H_{\varphi}=0
\]
Assuming the origin is a critical point and neglecting all second (and higher) order terms, as already done for the equation of the asymptotic directions, we obtain:

\begin{lemma}
The 1-jet of the binary differential equation (\ref{bdemean}) is the same as the 1-jet of the binary differential equation:
\beq\label{bdemdf}
M \de y^2+2(H_{\varphi}-H_{\psi})\de x\de y-M\de x^2=0
\eeq
\end{lemma}

\begin{lemma}
At a generic umbilic point in a Lagrangean surface, we have:
\[
\hbox{\rm ind } \mathfrak H=-\dfrac{1}{2}\hbox{\rm ind } \mathfrak h
\]
\end{lemma}
\begin{proof}
If the origin is a generic umbilic point, we have the normal form:
\begin{align*}
j^3\varphi=&\dfrac{1}{2}\alpha (x^2-y^2)+\beta xy+\dfrac{1}{3}\zeta_1x^3+\zeta_2x^2y+\zeta_3xy^2+\dfrac{1}{3}\zeta_4y^3\\
j^3\psi=&\dfrac{1}{2}\beta (x^2-y^2)-\alpha xy+\dfrac{1}{3}\zeta_2x^3+\zeta_3x^2y+\zeta_4xy^2 + \dfrac{1}{3}\zeta_5y^3
\end{align*}
with $\alpha^2+\beta^2\ne 0$.

The vector field:
\[
(\dot x, \dot y)=(a+c,e+g)=((\zeta_1+\zeta_3)x+(\zeta_2+\zeta_4)y, (\zeta_2+\zeta_4)x+(\zeta_3+\zeta_5)y)+O(2)
\]
spans the direction $ \mathfrak h$, and therefore $\hbox{\rm ind } \mathfrak h$ is $+1$ or $-1$ as the determinant:
\[
\left |
\begin{matrix}
\zeta_1+\zeta_3&\zeta_2+\zeta_4\\
\zeta_2+\zeta_4&\zeta_3+\zeta_5
\end{matrix}
\right |
\]
is positive or negative.

The vector fields:
\[
(\dot x, \dot y)=\left (M, -(H_{\varphi}-H_{\psi})\pm \sqrt{M^2+(H_{\varphi}-H_{\psi})^2}\right )\]
span the directions $ \mathfrak H$, and they rotate as the vector field 
$(\dot x, \dot y)=\left (M,-( H_{\varphi}-H_{\psi})\right )$ does. Since:
\begin{align*}
M=&(\alpha(\zeta_2+\zeta_4)-\beta(\zeta_1+\zeta_3))x+\\
&+(\alpha(\zeta_3+\zeta_5)-\beta(\zeta_2+\zeta_4))y+O(2)\\
H_{\varphi}-H_{\psi}=&-(\alpha(\zeta_1+\zeta_3)+\beta(\zeta_2+\zeta_4))x-\\
&(\alpha(\zeta_2+\zeta_4)+\beta(\zeta_3+\zeta_5))y+O(2)
\end{align*}
then $\hbox{\rm ind } \mathfrak H$ is $+1/2$ or $-1/2$ as the determinant:
\begin{align*}
&\left |
\begin{matrix}
\alpha(\zeta_2+\zeta_4)-\beta(\zeta_1+\zeta_3)&\alpha(\zeta_3+\zeta_5)-\beta(\zeta_2+\zeta_4)\\
\alpha(\zeta_1+\zeta_3)+\beta(\zeta_2+\zeta_4)&\alpha(\zeta_2+\zeta_4)+\beta(\zeta_3+\zeta_5)
\end{matrix}
\right |=
\\
&
=-(\alpha^2+\beta^2)
\left |
\begin{matrix}
\zeta_1+\zeta_3&\zeta_2+\zeta_4\\
\zeta_2+\zeta_4&\zeta_3+\zeta_5
\end{matrix}
\right |
\end{align*}
is positive or negative.
\end{proof}

To be precise, we should have considered the vector fields $(\dot x, \dot y)=\left (B,-A\pm \sqrt{A^2+B^2}\right )$ as spanning the directions defined by (\ref{bdemean}), but the same argument leads to the final vector field $(\dot x, \dot y)=\left (A,-B\right )$, and only its linear part  is relevant.

\begin{lemma}
At a generic inflection point in a Lagrangean surface, we have:
\begin{itemize}
\item If the topological model for the differential equation of the asymptotic lines is $Y \de Y^2 -2X\de X\de Y +Y^2 \de X^2=0$, then:
\[
\hbox{\rm ind } \mathfrak H=-\dfrac{1}{2}
\]
and the singularity of the mean directional field is of type $D_3$ (star) \cite{bfidal}.
\item If the topological model for the differential equation of the asymptotic lines is $Y \de Y^2 +2X\de X\de Y +Y^2 \de X^2=0$, then:
\[
\hbox{\rm ind } \mathfrak H=+\dfrac{1}{2}
\]
and the singularity of the mean directional field is of type $D_1$ (lemon) or $D_2$ (monstar)\cite{bfidal}.
\end{itemize}
\end{lemma}
\begin{proof}
Again $\hbox{\rm ind } \mathfrak H$ is $+1/2$ or $-1/2$ as the determinant of the linear part of $(\dot x, \dot y)=\left (M,-( H_{\varphi}-H_{\psi})\right )$ is positive or negative. From the normal form at inflection points it follows that:
\begin{align*}
M=&\eta (\zeta_4 x+\zeta_5y)+O(2)\\
H_{\varphi}-H_{\psi}=&\eta (\zeta_3 x+\zeta_4y)+O(2)
\end{align*}
then $\hbox{\rm ind } \mathfrak H$ is $+1/2$ or $-1/2$ as the determinant:
\[
-\eta^2
\left |
\begin{matrix}
\zeta_4&\zeta_5\\
\zeta_3&\zeta_4
\end{matrix}
\right |=-\eta^2(\zeta_4^2-\zeta_3\zeta_5)
\]
 is positive or negative.
 
 Recalling that for the binary differential equation of the asymptotic lines:
 \[
 \Pi(q)=\eta (\zeta_5 +2\zeta_4 q+\zeta_3 q^2) q
 \]
 we see that $\hbox{\rm ind } \mathfrak H$ is $+1/2$ or $-1/2$ as there exist one or three critical points for that equation.
\end{proof}

We already know that in the generic case:
\begin{itemize}
\item The singularities of $\mathfrak h$ are the umbilic points.
\item The indices of $\mathfrak h$ at those critical points are $\pm 1$.
\item The singularities of $\mathfrak H$ are the umbilic points and the inflection points.
\item The indices of $\mathfrak H$ at those critical points are $\pm 1/2$.
\item The  indices of $\mathfrak h$  and $\mathfrak H$ at umbilic points  have opposite signs ($\kappa<0$).
\end{itemize}
Note that if $\mathcal L$ is assumed to be  generic, we can prevent umbilic points where the curvature ellipse degenerates into a point ($K=\kappa=0$).

Then:
\[
\hbox{ind } \mathfrak H=-\dfrac{1}{2}\hbox{ind } \mathfrak h\hbox{ at umbilics, \ and thus \ }
\sum_{\rm umbilics}\hbox{ind } \mathfrak H=-\dfrac{1}{2}\chi(\mathcal L)
\]
and from:
\[
\sum_{\rm umbilics}\hbox{ind } \mathfrak H+\sum_{\rm inflections}\hbox{ind } \mathfrak H=\chi(\mathcal L)
\]
it follows:
\[
\sum_{\rm inflections}\hbox{ind } \mathfrak H=\dfrac{3}{2}\chi(\mathcal L)
\]
The estimate for the number of inflection points follows from the last  relation since the indices of $\mathfrak H$  are $\pm 1/2$. 
\end{proof}

\begin{rem}
From the previous proof, it follows that:
\begin{itemize}
\item The number of umbilic points is $\chi(\mathcal L)+2n$
\item The number of flat inflection points is $3\chi(\mathcal L)+2m$
\end{itemize}
with $n$ and $m$ nonnegative integers.
The minimal number of umbilic and inflection points happens when all critical points of $\mathfrak H$ have indices of the same sign as $\chi(\mathcal L)$.\end{rem}

\

\end{document}